\newtheoremstyle{ptheorem}{1em}{0em}{\itshape}{}{\bfseries}{.}{.5em}{\thmname{#1}\thmnumber{ #2}\thmnote{ (\hspace{-.01pt}{#3})}}
\theoremstyle{ptheorem}
\newtheorem{thm}{Theorem}[section]
\newtheorem{pro}[thm]{Proposition}
\newtheorem{lem}[thm]{Lemma}
\newtheorem{cor}[thm]{Corollary}
\newtheoremstyle{hdef}{1em}{0em}{}{}{\bfseries}{.}{.5em}{\thmname{#1}\thmnumber{ #2}\thmnote{ (\hspace{-.01pt}{#3})}}
\theoremstyle{hdef}
\newtheorem{dfn}[thm]{Definition}
\newtheorem{rem}[thm]{Remark}
\newtheoremstyle{premark}{1em}{0em}{
}{}{\scshape}{.}{.5em}{}
\theoremstyle{premark}
\newtheorem{exa}[thm]{Example}
\numberwithin{equation}{section}
\numberwithin{figure}{section}
\DeclareMathOperator{\sign}{sign}
\DeclareMathOperator{\Id}{Id}
\DeclareMathOperator{\dif}{d}
\newcommand{\cB}{{\mathcal B}}
\newcommand{\cC}{{\mathcal C}}
\newcommand{\cF}{{\mathcal F}}
\newcommand{\cO}{{\mathcal O}}
\newcommand{\cP}{{\mathcal P}}
\newcommand{\bN}{{\mathbb N}}
\newcommand{\bR}{{\mathbb R}}
\newcommand{\bZ}{{\mathbb Z}}
\renewcommand{\a}{\alpha}
\renewcommand{\b}{\beta}
\renewcommand{\l}{\lambda}
\newcommand{\e}{\epsilon}
\renewcommand{\phi}{\varphi}
\newcommand{\ol}{\overline}
\newcommand{\fa}{\forall}
\newcommand{\n}{{n\in\bN}}
\newcommand{\Ra}{\Rightarrow}
\newcommand{\nkp}{\enskip}
\newcommand{\sfa}{\nkp\fa}
\renewcommand{\(}{\left(}
\renewcommand{\)}{\right)}
\newcommand{\lil}{\lim\limits}
\newcommand{\olb}[1]{%
	\vbox{\offinterlineskip\ialign{\hfil##\hfil\cr $\rotatebox[origin=c]{90}{$]$}$\cr\noalign{\kern-.45ex}{$#1$}\cr}}}
\begin{document}
\title{Comparison results for first order linear operators with reflection and periodic boundary value conditions}

\author{
Alberto Cabada\thanks{Partially supported by FEDER and Ministerio de Educaci\'on y Ciencia, Spain, project MTM2010-15314} \, and F. Adri\'an F. Tojo\\
\normalsize
Departamento de An\'alise Ma\-te\-m\'a\-ti\-ca, Facultade de Matem\'aticas,\\ 
\normalsize Universidade de Santiago de Com\-pos\-te\-la, Spain.\\ 
\normalsize e-mail: alberto.cabada@usc.es, adrinusa@hotmail.com}
\date{}

\maketitle

\begin{abstract}
This work is devoted to the study of the first order operator $x'(t)+m\,x(-t)$ coupled with periodic boundary value conditions. We describe the eigenvalues of the operator and obtain the expression of its related Green's function in the non resonant case. We also obtain the range of the values of the real parameter $m$ for which the integral kernel, which provides the unique solution, has constant sign. In this way, we automatically establish maximum and anti-maximum principles for the equation. Some applications to the existence of nonlinear periodic boundary value problems are showed.
\end{abstract}

\noindent {\bf Keywords:}  Equations with reflection. Green's functions. Comparison principles. Periodic conditions.

\section{Introduction}
The study of functional differential equations with involutions can be traced back to the solution of the equation $x'(t)=x(\frac{1}{t})$ by Silberstein (see \cite{Sil}) in 1940. Wiener proves in \cite{Wie3} that the solutions of the Silberstein equation solve $t^2 x''(t)+x(t)=0$. On the other hand, by defining $y(t) = x(e^t)$, we conclude  that $x$ is a solution of the Silberstein equation if and only if $y'(t)=e^{-t} y(-t)$. This kind of equations are known as equations with reflection and, as \v{S}arkovski\u{\i} shows in \cite{Sar}, they have some applications to the stability of differential -- difference equations. Moreover this kind of equations has some interesting properties by itself, in fact it is not difficult to verify that the unique solution of the homogeneous harmonic oscillator $x''(t)+m^2\, x(t)=0$, coupled with the initial conditions $x(0)=x_0$, $x'(0)=-m\,x_0$, for any $x_0 \in \bR$, is the unique solution of the first order equation with reflection $x'(t)+m\, x(-t)=0$, $x(0)=x_0$ and vice-versa.

Wiener and Watkins study in \cite{Wie} the solution of the equation $x'(t)-a\, x(-t)=0$ with initial conditions. Equation $x'(t)+a\, x(t)+b\,x(-t)=g(t)$ has been treated by Piao in \cite{Pia, Pia2}. In \cite{Kul, Sha, Wie, Wat1, Wie2} some results are introduced to transform this kind of problems with involutions and initial conditions into second order ordinary differential equations with initial conditions or first order two dimensional systems, granting that the solution of the last will be a solution to the first. Furthermore, asymptotic properties and boundedness of the solutions of initial first order problems are studied in \cite{Wat2} and \cite{Aft} respectively. Second order boundary value problems have been considered in \cite{Gup, Gup2, Ore2, Wie2} for Dirichlet and Sturm-Liouville boundary value conditions, higher order equations has been studied in \cite{Ore}. Other techniques applied to problems with reflection of the argument can be found in \cite{And, Ma, Wie1}.

Despite all this progression of studies and to the best of our knowledge, the case of first order differential equations with reflection and periodic boundary value conditions has been disregarded so far. In this article and using some of the methods and results described in \cite{Cab1,Cab2}, we will find a solution to the equation $x'(t)+m\, x(-t)=h(t)$ with periodic boundary value conditions and then establish some properties of the solution. On the contrary to the majority of the previous mentioned papers, our approach consists on to study directly the first order functional equation and obtain the expression of the related Green's function.\par
In section 2 we present some ways in which differential problems with involutions can be reduced to ordinary differential problems. The expression of the Green's function mentioned before is obtained in section 3. Section 4 is devoted to the study of the range of the parameter $m \in \bR$ for which the sign of this function is constant. Finally, we present in section 5 some applications that allow us to ensure the existence of solutions for nonlinear problems. 

\section{Reduction of differential equations with involutions to ODE}
\begin{dfn} Let $A\subset\bR$, a function $\phi:A\to A$ such that $\phi\ne\Id$ and $\phi\circ\phi=\Id$ is called an \textbf{involution}.
\end{dfn}
Let us consider the problems
\begin{equation}\label{eqinv}
x'(t)=f(x(\phi(t))), \quad x(c)=x_c
\end{equation}
and
\begin{equation}\label{ode}
x''(t)=f'(f^{-1}(x'(t)))f(x(t))\phi'(t), \quad x(c)=x_c, \;x'(c)=f(x_c).
\end{equation}
Then we have the following Lemma:
\begin{lem}\label{lem1}
Let $(a,b)\subset\bR$ and let $f:(a,b)\to(a,b)$ be a diffeomorphism. Let $\phi\in\cC^1((a,b))$ be an involution. Let $c$ be a fixed point of $\phi$. Then $x$ is a solution of the first order differential equation with involution
(\ref{eqinv}) if and only if $x$ is a solution of the second order ordinary differential equation
(\ref{ode}).\end{lem}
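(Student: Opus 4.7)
The plan is to split the equivalence into two implications; the forward direction ((\ref{eqinv}) $\Rightarrow$ (\ref{ode})) reduces to a chain-rule computation, while the backward direction ((\ref{ode}) $\Rightarrow$ (\ref{eqinv})) requires introducing a planar first-order system whose reflection-symmetry, combined with uniqueness of the Cauchy problem, forces the reflection identity.

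For the forward direction, I would differentiate $x'(t)=f(x(\phi(t)))$ by the chain rule to obtain $x''(t)=f'(x(\phi(t)))\,x'(\phi(t))\,\phi'(t)$. Two substitutions then turn this expression into (\ref{ode}): first, from the original equation, $x(\phi(t))=f^{-1}(x'(t))$ (valid because $f$ is a diffeomorphism), which identifies the $f'$-factor; second, applying the original equation at $\phi(t)$ gives $x'(\phi(t))=f(x(\phi(\phi(t))))=f(x(t))$ by the involution property. The initial value $x'(c)=f(x_c)$ follows by evaluating the original equation at the fixed point $c$ of $\phi$.

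For the backward direction, direct integration of (\ref{ode}) cannot recover the reflection structure, so I would introduce the auxiliary unknown $u(t):=f^{-1}(x'(t))$, which satisfies $u(c)=x_c$ thanks to the initial condition $x'(c)=f(x_c)$. Dividing (\ref{ode}) by the nonvanishing factor $f'(u(t))$ rearranges it into the planar first-order system
\begin{equation*}
x'(t)=f(u(t)),\qquad u'(t)=f(x(t))\,\phi'(t),\qquad x(c)=u(c)=x_c.
\end{equation*}
The key observation is that this system is invariant under the substitution $(x,u)\mapsto(u\circ\phi,\,x\circ\phi)$: differentiating $\phi\circ\phi=\Id$ yields $\phi'(\phi(t))\,\phi'(t)=1$, which makes the two equations swap into each other, and the fixed-point condition $\phi(c)=c$ preserves the initial data. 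By local Lipschitz continuity of the right-hand side (coming from $f,\phi\in\cC^1$), the Cauchy problem admits a unique solution, so $(x,u)=(u\circ\phi,\,x\circ\phi)$. Consequently $u(t)=x(\phi(t))$, and substituting this into $x'(t)=f(u(t))$ recovers (\ref{eqinv}).

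The main difficulty is the backward implication: the coefficient $f'(f^{-1}(x'(t)))$ in (\ref{ode}) obscures the role of the reflection, and one only recovers it after passing to the auxiliary first-order system and exploiting its $\phi$-symmetry. This is precisely where the diffeomorphism hypothesis on $f$ (for invertibility and for Lipschitz control) and the fixed-point assumption on $c$ (to align the initial data under the symmetry) both play their role.
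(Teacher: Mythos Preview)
Your forward direction matches the paper's exactly. For the backward direction, your argument is correct but genuinely different from the paper's. The paper integrates the identity $(f^{-1})'(x'(t))\,x''(t)=f(x(t))\,\phi'(t)$ from $c$ to $t$, rewrites the result so that the discrepancy $g(s):=f(x(\phi(s)))-x'(s)$ appears under an integral, and then derives a self-referential bound of the form $|g(t)|\le K^2(c-a)\int_c^t|g(r)|\,\dif r$ by iterating once through the involution; Gr\"onwall's lemma then forces $g\equiv 0$. Your approach instead recasts~(\ref{ode}) as the planar Cauchy problem $x'=f(u)$, $u'=f(x)\,\phi'(t)$, $x(c)=u(c)=x_c$, observes that the map $(x,u)\mapsto(u\circ\phi,\,x\circ\phi)$ carries solutions to solutions (this is where $\phi'(\phi(t))\,\phi'(t)=1$ and $\phi(c)=c$ enter), and invokes Picard--Lindel\"of uniqueness to conclude $u=x\circ\phi$. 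Both routes ultimately rest on the same regularity (local Lipschitz continuity of $f$), but yours packages the estimate into a single appeal to standard ODE uniqueness rather than a hand-built Gr\"onwall iteration; it is shorter and makes the role of the reflection symmetry transparent. The only point worth making explicit is that your uniqueness argument a priori yields $u(t)=x(\phi(t))$ on $J\cap\phi(J)$, where $J$ is the interval of definition of $x$; since~(\ref{eqinv}) itself only makes sense where $\phi(t)$ lies in the domain of $x$, this is exactly the set on which the conclusion is meaningful, and the paper's proof carries the same implicit restriction.
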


We note that this Lemma improves Theorem 2.3 in \cite{Sha}. Also note that in the conditions of the Lemma, $\phi$ is decreasing and $c$ is the unique fixed point of which the existence is guaranteed (see the results in \cite{Wie}, which can be generalized to the interval $(a,b)$).

\begin{proof}
That those solutions of (\ref{eqinv}) are solutions of (\ref{eqinv}) is almost trivial. The boundary conditions are justified by the fact that $\phi(c)=c$. Since differentiating (\ref{eqinv}) we get
$$x''(t)=f'(x(\phi(t)))\,x'(\phi(t))\,\phi'(t)$$
and taking into account that $x'(\phi(t))=f(x(t))$ by (\ref{eqinv}), we obtain (\ref{ode}).\par
Conversely, let $x$ be a solution of (\ref{ode}). The equation implies that
\begin{equation}\label{e-f-1}
(f^{-1})'(x'(t))x''(t)=f(x(t))\phi'(t).\end{equation}
Integrating from $c$ to $t$ in (\ref{e-f-1}),
\begin{equation}\label{e-f-2}
f^{-1}(x'(t))-x_c=f^{-1}(x'(t))-f^{-1}(x'(c))=\int_c^tf(x(s))\phi'(s)\dif s\end{equation}
and thus, defining $g(s):=f(x(\phi(s)))-x'(s)$, we conclude from (\ref{e-f-2}) that
\begin{align*}
x'(t)&=f\(x_c+\int_c^tf(x(s))\phi'(s)\dif s\)=f\(x(\phi(t))+\int_c^t(f(x(s))-x'(\phi(s)))\phi'(s)\dif s\)\\
&=f\(x(\phi(t))+\int_c^{\phi(t)}(f(x(\phi(s)))-x'(s))\dif s\)=f\(x(\phi(t))+\int_c^{\phi(t)}g(s)\dif s\).
\end{align*}\par
Let us fix $t>c$ where $x(t)$ is defined. We will prove that (\ref{eqinv}) is satisfied in $[c,t]$ (the proof is done analogously for $t<c$). Recall that $\phi$ has to be decreasing, so $\phi(t)<c$. Also, since $f$ is a diffeomorphism, the derivative of $f$ is bounded on $[c,t]$, so $f$ is Lipschitz on $[c,t]$. Since $f$, $x$, $x'$ and $\phi'$ are continuous, we can define
$$K_1:=\inf\left\{\a\in\bR^+\ :\ \left|f\(x(\phi(r))+\int_c^{\phi(r)}g(s)\dif s\)-f(x(\phi(r)))\right|\le\a\left|\int_c^{\phi(r)}g(s)\dif s\right|\sfa r\in[c,t]\right\}$$
and
$$K_2:=\inf\left\{\a\in\bR^+\ :\ \left|f\(x(r)+\int_c^{r}g(s)\dif s\)-f(x(r))\right|\le\a\left|\int_c^{r}g(s)\dif s\right|\sfa r\in[c,t]\right\}$$
Let $K=\max\{K_1,K_2\}$. Now,
\begin{align*}
|g(t)|&=\left|f\(x(\phi(t))+\int_c^{\phi(t)}g(s)\dif s\)-f(x(\phi(t)))\right|\le K\left|\int_c^{\phi(t)}g(s)\dif s\right|\\&\le-K\int_c^{\phi(t)}|g(s)|\dif s=-K\int_c^{t}|g(\phi(s))|\phi'(s)\dif s.
\end{align*}
Applying this inequality at $r=\phi(s)$ inside the integral we deduce that
\begin{align*}|g(t)|&\le-K\int_c^{t}K\left|\int_c^{s}g(r)\dif r\right|\phi'(s)\dif s\le-K^2\int_c^{t}\int_c^{t}|g(r)|\dif r\nkp\phi'(s)\dif s\\&=K^2|\phi(t)-\phi(c)|\int_c^t|g(r)|\dif r\le K^2(c-a)\int_c^t|g(r)|\dif r.
\end{align*}
Thus, by Gr\"onwall's Lemma, $g(t)=0$ and hence (\ref{eqinv}) is satisfied for all $t<b$ where $x$ is defined.
\end{proof}
Notice that, as an immediate consequence of this result, we have that the unique solution of the equation 
$$x''(t)=-\sqrt{1+(x'(t))^2}\, \sinh{x(t)}, \quad x(0)=x_0,\; x'(0)=\sinh{x_0},$$
coincide with the unique solution of
$$x'(t)=\sinh{x(-t)}, \quad x(0)=x_0.$$

Furthermore, Lemma \ref{lem1} can be extended, with a very similar proof, to the case with periodic boundary value conditions.
Let us consider the equations
\begin{equation}\label{eqinvb}
x'(t)=f(x(\phi(t))), \quad x(a)=x(b)
\end{equation}
and
\begin{equation}\label{odeb}
x''(t)=f'(f^{-1}(x'(t)))f(x(t))\phi'(t), \quad x(a)=x(b)=f^{-1}(x'(a)).
\end{equation}
\begin{lem}\label{lem2}
Let $[a,b]\subset\bR$ and let $f:[a,b]\to[a,b]$ be a diffeomorphism. Let $\phi\in\cC^1([a,b])$ be an involution such that $\phi([a,b])=[a,b]$. Then $x$ is a solution of the first order differential equation with involution
(\ref{eqinvb}) if and only if $x$ is a solution of the second order ordinary differential equation
(\ref{odeb}).\end{lem}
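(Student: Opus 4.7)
The plan is to adapt the proof of Lemma \ref{lem1}, replacing the role of the fixed point $c$ by the endpoints $a$ and $b$. Since $\phi$ is a $\cC^1$ involution on $[a,b]$ different from the identity and $\phi([a,b])=[a,b]$, a standard argument forces $\phi$ to be strictly decreasing, so $\phi(a)=b$ and $\phi(b)=a$. This interchange, together with the periodic condition $x(a)=x(b)$, will play the same role that $\phi(c)=c$ played before.

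For the forward implication, assume $x$ solves \eqref{eqinvb}. Differentiating $x'(t)=f(x(\phi(t)))$ and substituting $x'(\phi(t))=f(x(t))$ exactly reproduces the ODE in \eqref{odeb}. The boundary condition $x(a)=x(b)$ is inherited, and evaluating \eqref{eqinvb} at $t=a$ gives $x'(a)=f(x(\phi(a)))=f(x(b))=f(x(a))$, whence $f^{-1}(x'(a))=x(a)=x(b)$, matching the data in \eqref{odeb}.

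For the converse, let $x$ solve \eqref{odeb}. Multiplying by $(f^{-1})'(x'(t))$, integrating from $a$ to $t$, and using $f^{-1}(x'(a))=x(a)$, I would obtain
$$f^{-1}(x'(t))=x(a)+\int_a^t f(x(s))\phi'(s)\dif s.$$
Setting $g(s):=f(x(\phi(s)))-x'(s)$ as in Lemma \ref{lem1} and applying the substitution $u=\phi(s)$ to the right-hand side (which sends $a\mapsto b$), the identity $x(a)=x(b)$ allows me to rewrite the above as
$$x'(t)=f\!\(x(\phi(t))-\int_{\phi(t)}^{b}g(u)\dif u\).$$
Lipschitz continuity of $f$ on $[a,b]$ then yields $|g(t)|\le K\bigl|\int_{\phi(t)}^{b}g(u)\dif u\bigr|$. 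A second change of variables $u=\phi(v)$, together with $\phi(b)=a$, converts this into an inequality of the form $|g(t)|\le K^2(b-a)\int_a^t|g(r)|\dif r$ (treating $t>a$; the case $t<b$ symmetrically). Gr\"onwall's Lemma then forces $g\equiv 0$, which is \eqref{eqinvb}.

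The main obstacle is precisely bookkeeping of the integral limits: since $a$ is no longer fixed by $\phi$ but sent to $b$, each application of the substitution $\phi$ swaps the endpoints rather than preserving them. The periodic condition $x(a)=x(b)$ is exactly what makes the boundary terms cancel after the first change of variables, so that the iterated inequality closes into a classical Gr\"onwall setting. Beyond this point, the argument is essentially identical to that of Lemma \ref{lem1}.
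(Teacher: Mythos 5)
Your proposal reproduces the paper's own strategy: the forward implication via $\phi(a)=b$, the integral identity $x'(t)=f\bigl(x(\phi(t))+\int_b^{\phi(t)}g(s)\,\dif s\bigr)$ with $g(s)=f(x(\phi(s)))-x'(s)$, and then a Gr\"onwall iteration. The forward direction and the derivation of the integral identity are correct. The problem is the step you describe as ``converts this into an inequality of the form $|g(t)|\le K^2(b-a)\int_a^t|g(r)|\,\dif r$''. After the substitution $u=\phi(v)$ you obtain $|g(t)|\le -K\int_a^t|g(\phi(v))|\,\phi'(v)\,\dif v$, and the only bound available for $|g(\phi(v))|$ is the basic estimate applied at the point $\phi(v)$, namely $|g(\phi(v))|\le K\bigl|\int_{v}^{b}g(r)\,\dif r\bigr|$ --- the inner integral is based at $b$, because $\phi(\phi(v))=v$. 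Its domain $[v,b]$ is not contained in $[a,t]$, so the double integral sweeps all of $[a,b]$ and the best you can extract is $\|g\|_\infty\le K^2(b-a)\|g\|_{L^1}$, which forces $g\equiv0$ only when $K(b-a)<1$. The iteration does not close into a one-sided Gr\"onwall inequality. (The paper's own write-up silently replaces $\int_b^{s}$ by $\int_a^{s}$ at exactly this point, so you have faithfully reproduced its gap rather than filled it.)

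The gap is not repairable by more careful bookkeeping, because the converse implication is false as stated. Take $[a,b]=[-\pi/4,\pi/4]$, $\phi(t)=-t$ and $f=\mathrm{id}$. Then (\ref{odeb}) reads $x''=-x$ with $x(-\pi/4)=x(\pi/4)=x'(-\pi/4)$, which is solved by $x(t)=A\cos t$ for every small $A>0$, whereas (\ref{eqinvb}) reads $x'(t)=x(-t)$, which $A\cos t$ does not satisfy. For this $x$ one has $g(t)=A(\cos t+\sin t)\not\equiv0$, and a direct computation shows that the integral identity $x'(t)=f\bigl(x(\phi(t))+\int_b^{\phi(t)}g(s)\,\dif s\bigr)$ nevertheless holds, so no argument starting from that identity alone can conclude $g\equiv0$; one also checks that $|g(0)|$ strictly exceeds $K^2(b-a)\int_a^0|g|$ here, so the inequality you aim for is simply not true. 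A correct version of the lemma needs an additional smallness or nonresonance hypothesis (for instance $K(b-a)<1$, under which the crude bound $\|g\|_\infty\le K^2(b-a)^2\|g\|_\infty$ does finish the proof).
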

\begin{proof}
Let $x$ be a solution of (\ref{eqinvb}). Since $\phi(a)=b$ we trivially get that $x$ is a solution of (\ref{odeb}).\par
Let $x$ be a solution of (\ref{odeb}). As in the proof of the previous lemma, we have that
$$x'(t)=f\(x(\phi(t))+\int_b^{\phi(t)}g(s)\dif s\),$$
where $g(s):=f(x(\phi(s)))-x'(s)$.\par
Let $K_1$, $K_2$ be as in the proof of Lemma $\ref{lem1}$ but changing $c$ by $a$ and $[c,t]$ by $[a,b]$. Let $K_1'$, $K_2'$ be as $K_1$, $K_2$ but changing $c$ by $b$. Let $K=\max\{K_1,K_2,K_1',K_2'\}$. Then, for $t$ in $[a,b]$,
\begin{align*}
|g(t)|&\le K\left|\int_b^{\phi(t)}g(s)\dif s\right|\le-K\int_a^{t}|g(\phi(s))|\phi'(s)\dif s\le-K\int_a^{t}K\left|\int_a^{s}g(r)\dif r\right|\phi'(s)\dif s\\&\le K^2|\phi(t)-\phi(a)|\int_a^t|g(r)|\dif r\le K^2(b-a)\int_a^t|g(r)|\dif r,
\end{align*}
and we conclude analogously to the other proof.
\end{proof}
\begin{rem} Condition $x(a)=x(b)=f^{-1}(x'(a))$ in Lemma \ref{lem2} can be replaced by $x(a)=x(b)=f^{-1}(x'(b))$. The proof in this case is analogous.
\end{rem}
Let $I:=[-T,T]\subset\bR$. In the case of a problem of the kind
\begin{equation}\label{eqinv2}
x'(t)=f(t,x(-t),x(t)), \quad x(-T)=x(T)
\end{equation}
we can think in other methods to reduce the problem to a system of first order ODE which will not be equivalent to (\ref{ode}). It is a known fact that any real function $g:I\to\bR$ can be expressed uniquely as $g=g_e+g_o$ where $g_e,g_o:I\to\bR$ are an even and an odd function respectively. $g_e$ and $g_o$ can be expressed as
$$g_e(t)=\frac{g(t)+g(-t)}{2},\quad g_o(t)=\frac{g(t)-g(-t)}{2}\quad \mbox{for all } \; t\in I.$$
In other words, if $\mathcal{D}(I)$, $\mathcal{E}(I)$, $\mathcal{O}(I)$ are, respectively, the real vector space of differentiable functions on $I$, the vector space of even differentiable functions on I and the vector space of odd differentiable functions on $I$, then $\mathcal{D}(I)=\mathcal{E}(I)\oplus\cO(I)$. Furthermore, the differential operator acts linearly on such space as
\begin{equation}\label{decomposition}\begin{CD}\mathcal{E}(I)\oplus\cO(I) @>D>> \mathcal{E}(I)\oplus\cO(I)\\
(\,g\,,\,h\,) @>>> \(\begin{smallmatrix}0 & D \\ D & 0\end{smallmatrix}\)\(\begin{smallmatrix}\ g \\ h\end{smallmatrix}\)=(h',g')\end{CD}\quad,\end{equation}
which allows us to stablish a system with two differential equations.\par
If we consider now the endomorphism $\xi:\bR^3\to\bR^3$ defined as
$$\xi(t,z,w)=\(t,z-w,z+w\)\sfa z,w\in\bR,$$
with inverse
$$\xi^{-1}(t,y,x)=\(t,\frac{x+y}{2},\frac{x-y}{2}\)\sfa \;x,\,y\in\bR.$$

It is clear that
$$f(t,x(-t),x(t))=(f\circ\xi)(t,x_e(t),x_o(t))\text{ and } f(-t,x(t),x(-t))=(f\circ\xi)(-t,x_e(t),-x_o(t)).$$

On the other hand, we define
\begin{align*}
g_e(t):=f_e(t,x(-t),x(t)) & =\frac{f(t,x(-t),x(t))+f(-t,x(t),x(-t))}{2}\\ & =\frac{(f\circ\xi)(t,x_e(t),x_o(t))+(f\circ\xi)(-t,x_e(t),-x_o(t))}{2}\end{align*}
and
$$g_o(t):=f_o(t,x(-t),x(t)) =\frac{(f\circ\xi)(t,x_e(t),x_o(t))-(f\circ\xi)(-t,x_e(t),-x_o(t))}{2},$$
which are an even and an odd function respectively. Furthermore, since $x_e$ is even, $x_e(-T)=x_e(T)$ and since $x_o$ is odd, $x_o(-T)=-x_o(T)$. Taking into account (\ref{decomposition}), we can state the following theorem.
\begin{thm}\label{thmeo} If $x$ is a solution for problem (\ref{eqinv2}) and $y(t)=x(-t)$, then $(z,w):I\to\bR^2$ satisfying $(t,z,w)=\xi^{-1}(t,y,x)$ is a solution for the system of ODE with boundary conditions
\begin{equation}\label{2eq}
\begin{aligned}
z'(t) & = \frac{(f\circ\xi)(t,z(t),w(t))-(f\circ\xi)(-t,z(t),-w(t))}{2},\quad t\in I,\\
w'(t) & = \frac{(f\circ\xi)(t,z(t),w(t))+(f\circ\xi)(-t,z(t),-w(t))}{2},\quad t\in I,\\
(z,w)(-T) & =(z,-w)(T).
\end{aligned}
\end{equation}
\end{thm}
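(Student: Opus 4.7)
The key observation is that the substitution $(t,z,w)=\xi^{-1}(t,y,x)$ with $y(t)=x(-t)$ is precisely the even/odd decomposition of $x$: one reads off that $z(t)=(x(t)+x(-t))/2=x_e(t)$ and $w(t)=(x(t)-x(-t))/2=x_o(t)$. Hence $z\in\mathcal{E}(I)$ and $w\in\mathcal{O}(I)$, which immediately yields the boundary condition $(z,w)(-T)=(z,-w)(T)$ from $z(-T)=z(T)$ and $w(-T)=-w(T)$. Thus the only work is to establish the two differential equations in (\ref{2eq}).

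My plan is to exploit the matrix form (\ref{decomposition}) of $D$ on $\mathcal{E}(I)\oplus\mathcal{O}(I)$. Since $x=z+w$ with $z$ even and $w$ odd, differentiation sends this pair to $(w',z')$, i.e.\ $x'=w'+z'$ with $w'\in\mathcal{E}(I)$ and $z'\in\mathcal{O}(I)$. Therefore $w'$ is the even part of $x'$ and $z'$ is the odd part of $x'$. Now apply this to both sides of (\ref{eqinv2}): the right hand side $t\mapsto f(t,x(-t),x(t))$ is exactly the function that the excerpt has already decomposed into its even part $g_e$ and odd part $g_o$, written in terms of $(z,w)$ via $f\circ\xi$. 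Equating even with even and odd with odd gives
\begin{align*}
w'(t) &= g_e(t) = \frac{(f\circ\xi)(t,z(t),w(t))+(f\circ\xi)(-t,z(t),-w(t))}{2},\\
z'(t) &= g_o(t) = \frac{(f\circ\xi)(t,z(t),w(t))-(f\circ\xi)(-t,z(t),-w(t))}{2},
\end{align*}
which is (\ref{2eq}).

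The only nontrivial verification is the identity $f(t,x(-t),x(t))=(f\circ\xi)(t,x_e(t),x_o(t))$ (and its reflected counterpart), which is just the definition of $\xi$: $\xi(t,x_e,x_o)=(t,\,x_e-x_o,\,x_e+x_o)=(t,\,x(-t),\,x(t))$. I would present this one-line check explicitly before invoking the even/odd splitting, since it is what couples the abstract decomposition with the concrete equation. After that, the rest is a straightforward identification of components, so I do not foresee any real obstacle — the main risk is bookkeeping with signs in the argument $-w(t)$ versus $w(t)$, which comes from the fact that reflecting $t\mapsto -t$ flips only the odd coordinate.
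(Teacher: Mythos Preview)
Your proposal is correct and follows essentially the same route as the paper: the discussion preceding the theorem already sets up the even/odd decomposition $x=x_e+x_o$, the matrix action (\ref{decomposition}) of $D$ on $\mathcal{E}(I)\oplus\mathcal{O}(I)$, the identity $f(t,x(-t),x(t))=(f\circ\xi)(t,x_e(t),x_o(t))$, and the parity-based boundary conditions, so that the theorem is stated as a direct consequence. Your write-up makes the identification $z=x_e$, $w=x_o$ and the matching of even/odd parts on both sides of (\ref{eqinv2}) explicit, which is exactly the intended argument.
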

We can take this one step further trying to ``undo'' what we did:
\begin{align*}
x'(t) & =(z+w)'(t)=(f\circ\xi)(t,z(t),w(t))=f(t,y(t),x(t)),\\
y'(t) &=(z-w)'(t)=-(f\circ\xi)(-t,z(t),-w(t))=-f(-t,x(t),y(t)),\\
(y,x)(-T)& =((z-w)(-T),(z+w)(-T))=((z+w)(T),(z-w)(T))=(x,y)(T).
\end{align*}
We get then the following result.
\begin{pro}\label{proxy} $(z,w)$ is a solution for problem (\ref{2eq}) if and only if $(y,x)$ such that $\xi(t,z,w)=(t,y,x)$ is a solution for the system of ODE with boundary conditions
\begin{equation}\label{2eq2}
\begin{aligned}
x'(t) & =f(t,y(t),x(t)),\\
y'(t) &=-f(-t,x(t),y(t)),\\
(y,x)(-T)& =(x,y)(T).
\end{aligned}
\end{equation}
\end{pro}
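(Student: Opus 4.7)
The plan is to observe that Proposition \ref{proxy} is a direct change-of-variables statement: the endomorphism $\xi$ is linear and involutive (up to the trivial action on the $t$-coordinate), and the two identities
\[
(f\circ\xi)(t,z,w)=f(t,y,x),\qquad (f\circ\xi)(-t,z,-w)=f(-t,x,y),
\]
which were derived immediately before the proposition from $(t,y,x)=\xi(t,z,w)=(t,z-w,z+w)$, convert the right-hand sides of \eqref{2eq} into sums and differences of the right-hand sides of \eqref{2eq2}. So the entire proof reduces to two symmetric verifications plus a check on boundary data.

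For the ``only if'' direction I would essentially copy the calculation already displayed just above the statement of Proposition \ref{proxy}: starting from $(z,w)$ satisfying \eqref{2eq}, set $x:=z+w$ and $y:=z-w$. Adding the two differential equations in \eqref{2eq} gives $x'=z'+w'=(f\circ\xi)(t,z,w)=f(t,y,x)$; subtracting them gives $y'=z'-w'=-(f\circ\xi)(-t,z,-w)=-f(-t,x,y)$. For the boundary condition, the relation $(z,w)(-T)=(z,-w)(T)$ means $z(-T)=z(T)$ and $w(-T)=-w(T)$, so $(y,x)(-T)=(z(T)+w(T),z(T)-w(T))=(x,y)(T)$.

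For the ``if'' direction I would run the same argument in reverse, using that $\xi$ is invertible with $\xi^{-1}(t,y,x)=(t,(x+y)/2,(x-y)/2)$. Given $(y,x)$ solving \eqref{2eq2}, define $z:=(x+y)/2$ and $w:=(x-y)/2$. Then
\[
z'=\tfrac{1}{2}(x'+y')=\tfrac{1}{2}\bigl(f(t,y,x)-f(-t,x,y)\bigr)=\tfrac{1}{2}\bigl((f\circ\xi)(t,z,w)-(f\circ\xi)(-t,z,-w)\bigr),
\]
and similarly $w'=\tfrac{1}{2}(x'-y')$ yields the second equation of \eqref{2eq}. The boundary identity $(y,x)(-T)=(x,y)(T)$ gives $y(-T)=x(T)$ and $x(-T)=y(T)$, whence $z(-T)=(x+y)(-T)/2=(y+x)(T)/2=z(T)$ and $w(-T)=(x-y)(-T)/2=-(x-y)(T)/2=-w(T)$, exactly the condition $(z,w)(-T)=(z,-w)(T)$.

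There is no real obstacle: the proof is a bookkeeping exercise in a linear change of variables, and the only point that needs slight care is keeping track of the sign flips in the second argument of $f\circ\xi$ at $-t$ (which encode the parity decomposition) and in the boundary conditions. Since these are already established in the computation displayed in the text preceding the proposition, the proof can be written very concisely by simply pointing to those identities and adding the inverse calculation.
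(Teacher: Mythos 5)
Your proof is correct and follows essentially the same route as the paper, which presents exactly the forward computation (adding and subtracting the equations of \eqref{2eq} under the change of variables $x=z+w$, $y=z-w$) in the displayed calculation immediately preceding the proposition and leaves the converse implicit. Your explicit reverse verification via $\xi^{-1}$ is the same linear bookkeeping run backwards and adds nothing beyond what the paper intends.
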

The next corollary can also be obtained in a straightforward way without going trough problem (\ref{2eq}).
\begin{cor}\label{corxy}If $x$ is a solution for problem (\ref{eqinv2}) and $y(t)=x(-t)$, then $(y,x):I\to\bR^2$ is a solution for the problem (\ref{2eq2}).
\end{cor}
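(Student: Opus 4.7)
The plan is to verify each component of the system (\ref{2eq2}) by direct substitution, using only the definition $y(t) := x(-t)$ and the fact that $x$ satisfies (\ref{eqinv2}). No machinery is needed: this corollary is essentially a rewriting of (\ref{eqinv2}) as a coupled first-order system, with the reflected argument replaced by the auxiliary unknown $y$.

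First I would check the first equation of (\ref{2eq2}). Since $x$ solves (\ref{eqinv2}), we have $x'(t)=f(t,x(-t),x(t))$, and by definition $x(-t)=y(t)$, so $x'(t)=f(t,y(t),x(t))$ holds on $I$. Second, I would differentiate the relation $y(t)=x(-t)$ by the chain rule to obtain $y'(t)=-x'(-t)$. Applying (\ref{eqinv2}) at the point $-t$ (which still lies in $I=[-T,T]$) yields $x'(-t)=f(-t,x(t),x(-t))=f(-t,x(t),y(t))$, so $y'(t)=-f(-t,x(t),y(t))$, which is the second equation of (\ref{2eq2}).

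Finally, I would verify the boundary condition $(y,x)(-T)=(x,y)(T)$. The first coordinate requires $y(-T)=x(T)$, which is immediate from $y(-T)=x(T)$ by definition of $y$. The second coordinate requires $x(-T)=y(T)$; since $y(T)=x(-T)$ by definition, this is automatic. (The periodic condition $x(-T)=x(T)$ from (\ref{eqinv2}) is not even needed for the boundary equality to hold in the form written, although it is consistent with the system: it ensures that both coordinates of $(y,x)(-T)$ and $(x,y)(T)$ coincide with the common value $x(T)=x(-T)$.)

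There is no real obstacle here; the only point that requires minor care is keeping track of the sign from the chain rule $\dif/\dif t\, x(-t) = -x'(-t)$ and making sure the arguments of $f$ are evaluated at the correct point ($-t$ rather than $t$) when invoking (\ref{eqinv2}) in the derivation of the $y'$ equation. Hence the whole verification fits in a few lines.
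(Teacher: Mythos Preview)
Your proof is correct and is precisely the ``straightforward way without going through problem (\ref{2eq})'' that the paper alludes to just before stating the corollary; the paper gives no explicit proof, but the intended alternative route is to compose Theorem~\ref{thmeo} with Proposition~\ref{proxy}. Your observation that the boundary identity $(y,x)(-T)=(x,y)(T)$ follows from the definition $y(t)=x(-t)$ alone, without invoking $x(-T)=x(T)$, is a nice extra remark.
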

Solving problems (\ref{2eq}) or (\ref{2eq2}) we can check whether $x$, obtained from the relation $(t,y,x)=\xi(t,z,w)$ is a solution to problem (\ref{eqinv2}). Unfortunately, not every solution of (\ref{2eq}) --or (\ref{2eq2})-- is a solution of (\ref{eqinv2}), as we show in the following example.
\begin{exa} Consider the problem
\begin{equation}
\label{e-ex}
x'(t)=x(t)\,x(-t), \; tÊ\in I; \quad x(-T)=x(T).
\end{equation}

Using Proposition \ref{proxy}, we know that the solutions of (\ref{e-ex}) are those of problem 
\begin{equation}\label{eqexa}\begin{aligned}
x'(t) & =x(t)\,y(t),\quad t \in I;\\
y'(t) & =-x(t)\,y(t),\quad t \in I;\\
(y,x)(-T)& =(x,y)(T).
\end{aligned}\end{equation}
It is easy to check that the only solutions of problem (\ref{eqexa}) defined on $I$ are of the kind
$$(x,y)=\(\frac{ce^{ct}}{e^{ct}+1},\frac{c}{e^{ct}+1}\),$$
with $c \in \bR$. However, if $c \neq 0$, $x(T)\ne x(-T)$ and, as consequence, no one of them is a solution of (\ref{e-ex}). 

Moreover, using Proposition \ref{proxy} again, we conclude that $x\equiv0$ is the only solution of  (\ref{e-ex}).
\end{exa}
\par
In a completely analogous way, we can study the problem
\begin{equation}\label{eqinv3}
x'(t)=f(t,x(-t),x(t)), \quad x(0)=x_0.
\end{equation}
In such a case we would have the following versions of the previous results:
\begin{thm}\label{thmeo2} If $x:(-\e,\e)\to\bR$ is a solution of problem (\ref{eqinv3}) and $y(t)=x(-t)$, then $(z,w):(-\e,\e)\to\bR^2$ satisfying $(t,z,w)=\xi^{-1}(t,y,x)$ is a solution of the system of ODE with initial conditions
\begin{equation}\label{2eq3}
\begin{aligned}
z'(t) & = \frac{(f\circ\xi)(t,z(t),w(t))-(f\circ\xi)(-t,z(t),-w(t))}{2},\quad t\in I,\\
w'(t) & = \frac{(f\circ\xi)(t,z(t),w(t))+(f\circ\xi)(-t,z(t),-w(t))}{2},\quad t\in I,\\
(z,w)(0) & =(x_0,0).
\end{aligned}
\end{equation}
\end{thm}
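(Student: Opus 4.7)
The plan is to mirror the derivation given just before Theorem \ref{thmeo}, replacing the periodic boundary condition with the pointwise initial condition at $t=0$. Starting from a solution $x:(-\e,\e)\to\bR$ of (\ref{eqinv3}), I set $y(t):=x(-t)$ and take $(z,w)$ defined by $(t,z,w)=\xi^{-1}(t,y,x)$, so that $z=(x+y)/2$ and $w=(x-y)/2$ are respectively the even and odd parts of $x$ on the symmetric interval $(-\e,\e)$. Evaluating at $t=0$ gives $y(0)=x(0)=x_0$, hence $z(0)=x_0$ and $w(0)=0$, which matches the last line of (\ref{2eq3}).

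For the differential equations, I differentiate $y(t)=x(-t)$ and apply (\ref{eqinv3}) at the point $-t$, obtaining $y'(t)=-x'(-t)=-f(-t,x(t),y(t))$. Combined with $x'(t)=f(t,y(t),x(t))$, the even-odd splitting (\ref{decomposition}) yields
\[
z'(t)=\tfrac{1}{2}\bigl(x'(t)+y'(t)\bigr),\qquad w'(t)=\tfrac{1}{2}\bigl(x'(t)-y'(t)\bigr).
\]
Substituting the expressions for $x'$ and $y'$ and invoking the identities $f(t,y(t),x(t))=(f\circ\xi)(t,z(t),w(t))$ and $f(-t,x(t),y(t))=(f\circ\xi)(-t,z(t),-w(t))$ already recorded above, one lands exactly on the two differential equations of (\ref{2eq3}).

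No serious obstacle is anticipated, as the argument is a direct computation. The only point worth flagging is that the domain must be symmetric about $0$ so that $y(t)=x(-t)$ makes sense wherever $x$ does; this is built into the hypothesis $x:(-\e,\e)\to\bR$. Note that, in contrast with Lemmas \ref{lem1} and \ref{lem2}, where a Gr\"onwall-type estimate was needed to recover the functional equation from the reduced ODE, here only the forward implication is asserted, so no such delicate argument is required.
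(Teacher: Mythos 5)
Your proof is correct and follows essentially the same route as the paper: the paper states Theorem \ref{thmeo2} as the direct analogue of Theorem \ref{thmeo}, whose justification is precisely the even--odd decomposition and the identities $f(t,x(-t),x(t))=(f\circ\xi)(t,x_e(t),x_o(t))$ and $f(-t,x(t),x(-t))=(f\circ\xi)(-t,x_e(t),-x_o(t))$ developed just before it, with the periodic condition replaced by evaluation at $t=0$. Your computation of $z'$, $w'$ and of the initial values $z(0)=x_0$, $w(0)=0$ matches that argument exactly.
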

\begin{pro}\label{proxy2} $(z,w)$ is a solution of problem (\ref{2eq3}) if and only if $(y,x)$ such that $\xi(t,z,w)=(t,y,x)$ is a solution of the system of ODE with initial conditions
\begin{equation}\label{2eq4}
\begin{aligned}
x'(t) & =f(t,y(t),x(t)),\\
y'(t) &=-f(-t,x(t),y(t)),\\
(y,x)(0)& =(x_0,x_0).
\end{aligned}
\end{equation}
\end{pro}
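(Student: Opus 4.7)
The plan is to observe that this statement is the initial-value analogue of Proposition \ref{proxy}, and therefore admits essentially the same proof: the map $\xi$ is a linear invertible change of variables, so the systems (\ref{2eq3}) and (\ref{2eq4}) are literally rewritings of each other once one identifies $y=z-w$ and $x=z+w$. So I would just perform the direct algebraic verification in both directions, treating the differential equations and the point conditions separately.

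For the forward implication, I would suppose $(z,w)$ solves (\ref{2eq3}) and set $(y,x)=(z-w,z+w)$. Adding the two equations of (\ref{2eq3}) yields $x'=z'+w'=(f\circ\xi)(t,z(t),w(t))$, while subtracting them yields $y'=z'-w'=-(f\circ\xi)(-t,z(t),-w(t))$. Using the definition of $\xi$ one has
\begin{align*}
(f\circ\xi)(t,z,w) &= f(t,z-w,z+w) = f(t,y,x),\\
(f\circ\xi)(-t,z,-w) &= f(-t,z+w,z-w) = f(-t,x,y),
\end{align*}
which turns the above into precisely the ODE system in (\ref{2eq4}). The initial conditions match because $x(0)=z(0)+w(0)=x_0+0=x_0$ and $y(0)=z(0)-w(0)=x_0-0=x_0$.

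For the converse, starting from $(y,x)$ solving (\ref{2eq4}) I would set $z=(x+y)/2$ and $w=(x-y)/2$ (which is just $\xi^{-1}$), form $z'=(x'+y')/2$ and $w'=(x'-y')/2$, and substitute the right-hand sides from (\ref{2eq4}); the same identification of $(f\circ\xi)$ with $f$ then recovers (\ref{2eq3}), and the initial conditions $(z,w)(0)=(x_0,0)$ follow trivially. There is no real obstacle here beyond bookkeeping; the only point worth emphasizing is that, unlike in Lemma \ref{lem2}, no Grönwall-type argument is needed because we are not collapsing an equivalent second-order scalar problem but simply reshuffling a first-order $2\times 2$ system by a constant linear change of variables.
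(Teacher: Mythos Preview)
Your proof is correct and mirrors exactly the paper's approach: the paper treats Proposition \ref{proxy2} as the initial-value analogue of Proposition \ref{proxy}, and for the latter it simply carries out the same linear change of variables computation (adding and subtracting the $z'$ and $w'$ equations, then identifying $(f\circ\xi)(t,z,w)=f(t,y,x)$ and $(f\circ\xi)(-t,z,-w)=f(-t,x,y)$) that you spell out. The paper does not give a separate proof for Proposition \ref{proxy2}, noting only that the initial-value case is handled ``in a completely analogous way,'' which is precisely what you have done.
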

\begin{cor}\label{corxy2} If $x:(-\e,\e)\to\bR$ is a solution of problem (\ref{eqinv3}) and $y(t)=x(-t)$, then $(y,x):(-\e,\e)\to\bR^2$ is a solution of problem (\ref{2eq4}).
\end{cor}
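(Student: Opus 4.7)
The plan is to give the direct verification that the paper alludes to when it says Corollary~\ref{corxy2} ``can also be obtained in a straightforward way without going trough problem~(\ref{2eq3})''. So the approach is simply to check that the pair $(y,x)$ with $y(t)=x(-t)$ satisfies each of the three lines of problem~(\ref{2eq4}), using only the definition of $y$ and the hypothesis that $x$ solves~(\ref{eqinv3}).

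First I would handle the second component: since $x$ solves (\ref{eqinv3}), plugging $y(t)=x(-t)$ into the right-hand side gives $f(t,y(t),x(t))=f(t,x(-t),x(t))=x'(t)$, which is exactly the first equation of (\ref{2eq4}). Next, for the $y$-equation, I differentiate $y(t)=x(-t)$ to get $y'(t)=-x'(-t)$; applying (\ref{eqinv3}) at the point $-t$ yields $x'(-t)=f(-t,x(t),x(-t))=f(-t,x(t),y(t))$, so $y'(t)=-f(-t,x(t),y(t))$ as required. Finally, the initial condition $y(0)=x(-0)=x(0)=x_0$ together with $x(0)=x_0$ gives $(y,x)(0)=(x_0,x_0)$.

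Alternatively, one could deduce the corollary from Proposition~\ref{proxy2} by setting $(z,w)=\xi^{-1}(\cdot,y,x)=\bigl(\cdot,(x+y)/2,(x-y)/2\bigr)$ and checking that $(z,w)$ solves (\ref{2eq3}); then Proposition~\ref{proxy2} produces (\ref{2eq4}). But this detour requires verifying the same differential identities in a less transparent form, so the direct route above is preferable.

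There is essentially no obstacle here: the statement is a direct translation of the scalar equation with a reflected argument into a two-dimensional first-order system, and the only step to be careful about is the sign coming from differentiating $x(-t)$, which is precisely what produces the minus sign in the $y'$-equation of~(\ref{2eq4}). All the required regularity is already supplied by the hypothesis that $x$ is differentiable on $(-\e,\e)$, which is a symmetric interval, so $y(t)=x(-t)$ is defined and differentiable on the same interval.
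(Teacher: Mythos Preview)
Your proof is correct and is exactly the direct verification the paper has in mind; the paper gives no explicit proof for this corollary, only the remark (stated for the analogous Corollary~\ref{corxy}) that it ``can also be obtained in a straightforward way without going trough problem~(\ref{2eq})''. Your check of the two differential equations and the initial condition is precisely that straightforward way.
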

\begin{rem} The relation $y(t)=x(-t)$ is used in \cite{Wie2} to study conditions under which the problem
$$x'(t)=f(t,x(t),x(-t)),\quad t\in\bR$$
has a unique bounded solution.
\end{rem}

\section{Solution of the equation $x'(t)+m\, x(-t)=h(t)$}

In this section we will solve a first order linear equation with reflection coupled with periodic boundary value conditions. More concisely, we consider the following differential functional equation:
\begin{subequations}\label{eq1}
\begin{align}
\label{eq1a} x'(t)+m\, x(-t)&=h(t),\nkp t\in I,\\ \label{cn1} x(T)-x(-T)&=0,
\end{align}
\end{subequations}
where $m$ is a real non-zero constant, $T\in\bR^+$ and $h\in L^1(I)$.\par
In the homogeneous case, this is, $h\equiv0$, differentiating (\ref{eq1a}) and with some substitutions we arrive to the conclusion that any solution of (\ref{eq1}) has to satisfy the problem
\begin{equation}x''+m^2x=0, \quad x(T)-x(-T)= x'(T)-x'(-T)=0.
\end{equation}
Consider now the following ordinary differential equation with homogeneous boundary conditions
\begin{subequations}\label{eq2}
\begin{align}
\label{eq2a} x''(t)+m^2x(t)&=f(t),\nkp t\in I,\\ \label{cn2} x(T)-x(-T)&=0,\\x'(T)-x'(-T)&=0,
\end{align}
\end{subequations}
where $f$ is a continuous function on $I$.

There is much literature on how to solve this problem and the properties of the solution (see for instance \cite{Cab1,Cab2,Aga}). It is very well known that for all $m^2 \neq (k \pi/T)^2$, $k=0,1, \ldots$, problem (\ref{eq2}) has a unique solution given by the expression
$$u(t)=\int_{-T}^TG(t,s)f(s)\dif s,$$ 
where $G$ is the so-called Green's function. 

This function is unique insofar as it satisfies the following properties:
\begin{enumerate}
\item $G\in\cC(I^2,\bR)$,
\item $\frac{\partial G}{\partial t}$ and $\frac{\partial^2 G}{\partial t^2}$ exist and are continuous in $\{(t,s)\in I^2\ |\ s\ne t\}$,
\item $\frac{\partial G}{\partial t}(t,t^-)$ and $\frac{\partial G}{\partial t}(t,t^+)$ exist for all $t\in I$ and satisfy
$$\frac{\partial G}{\partial t}(t,t^-)-\frac{\partial G}{\partial t}(t,t^+)=1\sfa t\in I,$$
\item $\frac{\partial^2 G}{\partial t^2}+m^2G=0\text{ in }\{(t,s)\in I^2\ |\ s\ne t\},$
\item \begin{enumerate}
\item $G(T,s)=G(-T,s)\sfa s\in I$,
\item $\frac{\partial G}{\partial t}(T,s)=\frac{\partial G}{\partial t}(-T,s)\sfa s\in(-T,T)$.
\end{enumerate}
\end{enumerate}
The solution to (\ref{eq2}) is unique whenever $T\in\bR^+\backslash\{\frac{k\pi}{|m|}\}_{k\in\bN}$, so the solution to (\ref{eq1}) is unique in such a case. We will assume uniqueness conditions from now on.\par
The following proposition gives us some more properties of the Green's function for (\ref{eq2}).
\begin{pro} For all $t,s\in I$, the Green's function associated to problem (\ref{eq2}) satisfies the following properties as well:
\begin{enumerate}
\setcounter{enumi}{5}
\item $G(t,s)=G(s,t)$,
\item $G(t,s)=G(-t,-s)$,
\item $\frac{\partial G}{\partial t}(t,s)=\frac{\partial G}{\partial s}(s,t)$,
\item $\frac{\partial G}{\partial t}(t,s)=-\frac{\partial G}{\partial t}(-t,-s)$,
\item $\frac{\partial G}{\partial t}(t,s)=-\frac{\partial G}{\partial s}(t,s)$.
\end{enumerate}
\end{pro}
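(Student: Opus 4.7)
The plan is to establish each identity by leveraging the uniqueness of the Green's function under properties 1--5, which holds in the non-resonant case: any function of $(t,s)$ fulfilling 1--5 must coincide with $G$. Properties 6 and 7 express, respectively, the self-adjointness of $L := \partial_t^2 + m^2$ under the periodic BCs and the invariance of the problem under $t \mapsto -t$. My plan is to establish 6 by applying Green's identity
\begin{equation*}
\int_{-T}^T \big((Lu)\, v - u\, (Lv)\big)\, \dif t = \big[u'v - u v'\big]_{-T}^T = 0
\end{equation*}
to $u := G(\cdot, s_0)$ and $v := G(\cdot, s_1)$ (the boundary term vanishes by the periodic BCs), together with $Lu = \delta_{s_0}$, $Lv = \delta_{s_1}$ distributionally, yielding $G(s_1, s_0) = G(s_0, s_1)$; and to establish 7 by setting $H(t,s) := G(-t, -s)$, checking mechanically that $H$ again fulfils 1--5 (the operator is invariant under $t \mapsto -t$ and the periodic BCs are symmetric in $\pm T$), and invoking uniqueness to conclude $H = G$. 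Properties 8 and 9 then follow at once by differentiating 6 and 7 with respect to $t$.

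For identity 10, I would define $H(t,s) := \partial_t G(t,s) + \partial_s G(t,s)$ and show that for each fixed $s \in I$ the map $t \mapsto H(t,s)$ is a classical solution of the homogeneous periodic BVP
\begin{equation*}
\partial_t^2 H + m^2 H = 0, \quad H(T,s) = H(-T,s), \quad \partial_t H(T,s) = \partial_t H(-T,s).
\end{equation*}
Under the non-resonance assumption $T \notin \{k\pi/|m|\}_{k \in \bN}$ this forces $H \equiv 0$, which is exactly property 10. The homogeneous ODE on $I \setminus \{s\}$ follows by differentiating property 4 separately in $t$ and in $s$ and summing, yielding $\partial_t^2(\partial_t G) + m^2 (\partial_t G) = 0$ and $\partial_t^2(\partial_s G) + m^2 (\partial_s G) = 0$. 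The boundary conditions for $H$ at $t = \pm T$ come from property 5b directly and from differentiating property 5a with respect to $s$; those for $\partial_t H$ follow by differentiating 5b in $s$ and applying equality of mixed partials.

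The main obstacle is to check that $H$ and $\partial_t H$ are continuous across the diagonal $t = s$, so that $H$ qualifies as a genuine classical $C^2$ solution of the periodic problem. Both $\partial_t G$ and $\partial_s G$ carry jumps there: the standard Green's jump of $\partial_t G$ is $+1$, while property 3 combined with the already-established symmetry 6 yields the opposite jump $-1$ for $\partial_s G$, so $H$ itself is continuous at the diagonal. For the continuity of $\partial_t H = \partial_t^2 G + \partial_s \partial_t G$, one notes that $\partial_t^2 G = -m^2 G$ is continuous, and expands $G$ on either side of the diagonal in the basis $\{\cos(mt), \sin(mt)\}$ as $A^{\pm}(s)\cos(mt) + B^{\pm}(s)\sin(mt)$; using the continuity of $G$ and the jump of $\partial_t G$ to fix $A^+ - A^-$ and $B^+ - B^-$, a direct computation shows that $\partial_s \partial_t G$ has no jump across the diagonal. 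The ODE then propagates continuity to $\partial_t^2 H$, so $H$ is a bona fide $C^2$ solution of the periodic problem, and non-resonance closes the argument.
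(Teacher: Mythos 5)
Your proposal is correct, but for properties (VII) and (X) it follows a genuinely different route from the paper. For (VI) you and the authors do essentially the same thing: they invoke self-adjointness of $L=\frac{\dif^2}{\dif t^2}+m^2$ and cite the standard argument, which is exactly the Green's identity computation you spell out; and (VIII)--(IX) are obtained in both cases by differentiating (VI)--(VII). For (VII) the paper argues through the solution operator: if $u$ solves (\ref{eq2}) then $v(t):=u(-t)$ solves the problem with datum $f(-t)$, and comparing the two integral representations of $v$ plus density of continuous functions in $L^2(I)$ gives $G(t,-s)=G(-t,s)$; you instead check that $H(t,s):=G(-t,-s)$ satisfies the five characterizing properties and invoke uniqueness of the Green's function. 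Both are sound; yours requires the (routine but sign-sensitive) verification of the jump condition for $H$, while theirs requires nothing beyond the representation formula. The real divergence is in (X): the paper takes $f$ differentiable, observes that $v=u'$ solves the problem with datum $f'$ and a non-homogeneous boundary condition $x'(T)-x'(-T)=f(T)-f(-T)$, writes $v$ both via the Green's function plus a boundary correction term and via direct differentiation under the integral, integrates by parts, and concludes $\partial_tG=-\partial_sG$ by density. You instead show that $H:=\partial_tG+\partial_sG$ is, for each fixed $s$, a classical solution of the homogeneous periodic problem and hence vanishes by non-resonance. Your route trades the paper's density argument and the handling of the boundary correction term for the diagonal-regularity check, which you carry out correctly: the opposite $\pm1$ jumps of $\partial_tG$ and $\partial_sG$ (the latter deduced from the symmetry (VI)) cancel in $H$, and the expansion $G=A^{\pm}(s)\cos(mt)+B^{\pm}(s)\sin(mt)$ with $A^{+}-A^{-}=-\sin(ms)/m$, $B^{+}-B^{-}=\cos(ms)/m$ indeed forces $\partial_s\partial_tG$ to be continuous across the diagonal. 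Your argument has the advantage of working directly with $G$ rather than with test data $f$, at the cost of being somewhat longer; it also makes transparent exactly where non-resonance is used.
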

\begin{proof}
$(VI)$. The differential operator $L=\frac{\dif^2}{\dif t^2}+m^2$ associated to equation (\ref{eq2}) is self-adjoint, so in an analogous way to \cite[Chapter 33]{Aga},  we deduce that function $G$ is symmetric.

$(VII)$. Let $u$ be a solution to (\ref{eq2}) and define $v(t):=u(-t)$, then $v$ is a solution of problem (\ref{eq2}) with $f(-t)$ instead of $f(t)$. This way
$$v(t)=\int_{-T}^{T}G(t,s)f(-s)\dif s=\int_{-T}^{T}G(t,-s)f(s)\dif s,$$
but we have also
$$v(t)=u(-t)=\int_{-T}^{T}G(-t,s)f(s)\dif s,$$
therefore 
$$\int_{-T}^{T}[G(t,-s)-G(-t,s)]f(s)=0$$
 and, since continuous functions are dense in $L^2(I)$, $G(t,-s)=G(-t,s)$ on $I^2$, this is, 
 $$G(t,s)=G(-t,-s)\sfa t,s\in I.$$
 
To prove $(VIII)$ and $(IX)$ it is enough to differentiate $(VI)$ and $(VII)$ with respect to $t$.\par
$(X)$ Assume $f$ is differentiable. Let $u$ be a solution to (\ref{eq2}), then $u \in C^1(I)$ and $v \equiv u'$ is a solution of \begin{subequations}
\begin{align}
 x''(t)+m^2x(t)&=f'(t),\nkp t\in I,\\ x(T)-x(-T)&=0,\\x'(T)-x'(-T)&=f(T)-f(-T).
\end{align}
\end{subequations}\par
Therefore,
$$v(t)=\int_{-T}^TG(t,s)f'(s)\dif s-G(t,-T)[f(T)-f(-T)],$$
where the second term in the right hand side stands for the non-homegeneity of the boundary conditions and properties $(III)$, $(IV)$ and $(V)$ $(a)$.

Hence, from $(V) (a)$ and $(VI)$, we have that
\begin{eqnarray*}
v(t)&=&G(t,s)f(s)\big\rvert_{s=-T}^{s=T}-\int_{-T}^t\frac{\partial G}{\partial s}(t,s)f(s)\dif s-\int_{t}^T\frac{\partial G}{\partial s}(t,s)f(s)\dif s\\
&&-G(t,-T)[f(T)-f(-T)]\\
&=&G(t,-T)[f(T)-f(-T)]-\int_{-T}^T\frac{\partial G}{\partial s}(t,s)f(s)\dif s -G(t,-T)[f(T)-f(-T)].
\end{eqnarray*}

On the other hand,
$$v(t)=u'(t)=\frac{\dif}{\dif t}\int_{-T}^tG(t,s)f(s)\dif s+\frac{\dif}{\dif t}\int_{t}^TG(t,s)f(s)\dif s=\int_{-T}^T\frac{\partial G}{\partial t}(t,s)f(s)\dif s.$$

Since differentiable functions are dense in $L^2(I)$, we conclude that
$$\frac{\partial G}{\partial t}(t,s)=-\frac{\partial G}{\partial s}(t,s).$$
\end{proof}

Now we are in a position to prove the main result of this section, in which we deduce the expression of the Green's function related to problem (\ref{eq1}).
\begin{pro}\label{Greenf} Suppose that $m \neq k \, \pi/T$, $k \in \bZ$. Then problem (\ref{eq1}) has a unique solution given by the expression
\begin{equation}
\label{e-u}
u(t):=\int_{-T}^T\ol G(t,s)h(s)\dif s,
\end{equation}
where $$\ol{G}(t,s):=m\,G(t,-s)-\frac{\partial G}{\partial s}(t,s)$$
is called the \textbf{Green's function} related to problem (\ref{eq1}).
\end{pro}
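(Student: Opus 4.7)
I would verify directly that the $u$ of (\ref{e-u}) is the unique solution of (\ref{eq1}), first establishing uniqueness via the second-order reduction already noted at the start of the section, then building existence by relating $u$ to a suitable periodic second-order problem whose Green's function is $G$.

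\emph{Uniqueness.} Taking $h \equiv 0$, I would observe that any solution $x$ is $C^1$ by bootstrap (since $x'(t) = -m\,x(-t)$ makes $x'$ as smooth as $x$). As noted in the section, $x$ then satisfies $x''(t) + m^2 x(t) = 0$; evaluating $x' = -m\,x(-\cdot)$ at $\pm T$ together with $x(T) = x(-T)$ yields $x'(T) = -m\,x(T) = x'(-T)$. Since $m \ne k\pi/T$ for every $k\in\bZ$, this periodic second-order problem forces $x \equiv 0$.

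\emph{Existence.} I would assume $h \in C^1(I)$ first (the $h \in L^1(I)$ case should follow by approximation, since $\bar G(t,\cdot)$ is bounded and piecewise continuous). In (\ref{e-u}) I would substitute $s \mapsto -s$ in the term involving $G(t,-s)$, and integrate the $\partial_s G(t,s)$ term by parts after splitting the integral at $s = t$: the contributions at $s = t^\pm$ cancel by continuity of $G$ (property I), while the boundary contributions at $s = \pm T$ combine via $G(t,T) = G(T,t) = G(-T,t) = G(t,-T)$, which follows from (VI) and (V)(a). This should yield the identity
\[
u(t) \;=\; \int_{-T}^T G(t,s)\big[h'(s) + m\,h(-s)\big]\dif s \;-\; G(t,-T)\big[h(T) - h(-T)\big].
\]
By the defining properties of $G$, this $u$ is precisely the solution of the ordinary second-order periodic problem
\[
u''(t) + m^2 u(t) = h'(t) + m\,h(-t),\quad u(T) = u(-T),\quad u'(T) - u'(-T) = h(T) - h(-T),
\]
the non-homogeneous derivative condition being generated by the correction term through the jump (III) of $\partial_t G$.

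\emph{Reversing the reduction.} Setting $\phi(t) := u'(t) + m\,u(-t) - h(t)$, a direct computation gives
\[
\phi'(t) + m\,\phi(-t) \;=\; u''(t) + m^2 u(t) - h'(t) - m\,h(-t) \;=\; 0,
\]
and the boundary identities for $u$ yield $\phi(T) - \phi(-T) = [u'(T)-u'(-T)] + m[u(-T)-u(T)] - [h(T)-h(-T)] = 0$. Hence $\phi$ solves the homogeneous version of (\ref{eq1}), so the uniqueness step forces $\phi \equiv 0$, i.e., $u$ satisfies (\ref{eq1a}); the condition (\ref{cn1}) is already built into the second-order formulation. The hardest part will be the careful integration by parts across the jump of $\partial_s G$ at $s = t$ and the precise identification of the correction term $-G(t,-T)[h(T)-h(-T)]$ as the piece that compensates for the non-homogeneous derivative periodic condition produced by differentiating (\ref{eq1a}); the symmetry/antisymmetry identities (V)(a), (VI) and (X) are exactly what make all this bookkeeping close up.
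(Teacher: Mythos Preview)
Your argument is correct, but it follows a genuinely different route from the paper's.

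The paper verifies directly that $u$ given by (\ref{e-u}) satisfies (\ref{eq1a}): it differentiates the integral (splitting at $s=\pm t$), picks up $h(t)$ from the jump $\ol G(t,t^-)-\ol G(t,t^+)=1$ (which comes from property~(III) via~(X)), and then uses the algebraic identities (IV), (VII), (IX), (X) to see that the remaining integrand
\[
m\,\partial_t G(t,-s)-\partial^2_{ts}G(t,s)+m^2G(-t,-s)-m\,\partial_s G(-t,s)
\]
vanishes pointwise. The periodic condition is checked the same way, via (V) and (X). This works for arbitrary $h\in L^1(I)$ in one stroke.

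Your approach instead goes through the second-order reduction: you rewrite $u$ (for $h\in C^1$) as the solution of $u''+m^2u=h'+m\,h(-\cdot)$ with the correct non-homogeneous derivative condition, and then close the loop by showing $\phi:=u'+m\,u(-\cdot)-h$ solves the homogeneous first-order problem, hence vanishes. This is longer---it needs the $C^1$ case first and then a density step---but it is more explanatory: it makes transparent \emph{why} $\ol G$ has exactly the form $m\,G(t,-s)-\partial_sG(t,s)$, namely because integrating $G$ against $h'(s)+m\,h(-s)$ and undoing the integration by parts produces precisely that kernel. The paper's computation is slicker but hides this structure; yours exposes it at the cost of an approximation argument that you should spell out (uniform convergence of $u_n$ plus $L^1$ convergence of $u_n'=h_n-m\,u_n(-\cdot)$ gives absolute continuity of $u$ with the right a.e.\ derivative).
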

\begin{proof}
As we have previously remarked, problem (\ref{eq1}) has at most one solution for all $m \neq k \, \pi/T$, $k \in \bZ$. The existence of solution follows from the Theorem of Alternative. Let's see that function $u$ defined in (\ref{e-u}) fulfills (\ref{eq1}) (we assume $t>0$, the other case is analogous):

\begin{eqnarray*}
u'(t)+m\, u(-t)&=&\frac{\dif}{\dif t}\int_{-T}^{-t}\ol G(t,s)h(s)\dif s+\frac{\dif}{\dif t}\int_{-t}^t\ol G(t,s)h(s)\dif s\\
&&+\frac{\dif}{\dif t}\int_{t}^T\ol G(t,s)h(s)\dif s+m\int_{-T}^T\ol G(-t,s)h(s)\dif s\\
&=&(\ol G(t,t^-)-\ol G(t,t^+))h(t)+\int_{-T}^T\left[m\frac{\partial G}{\partial t}(t,-s)-\frac{\partial^2 G}{\partial t\partial s}(t,s)\right]h(s)\dif s\\
&&+m\int_{-T}^T\left[mG(-t,-s)-\frac{\partial G}{\partial s}(-t,s)\right]h(s)\dif s.
\end{eqnarray*}

Using $(III)$ and $(X)$, we deduce that this last expression is equal to

$$h(t)+\int_{-T}^T\left[m\frac{\partial G}{\partial t}(t,-s)-\frac{\partial^2 G}{\partial t\partial s}(t,s)+m^2G(-t,-s)-m\frac{\partial G}{\partial s}(-t,s)\right]h(s)\dif s.$$
which is, by $(IV)$, $(VII)$, $(IX)$ and $(X)$, equal to
$$h(t)+\int_{-T}^T\left(m\left[\frac{\partial G}{\partial t}(t,-s)-\frac{\partial G}{\partial s}(-t,s)\right]
+\frac{\partial^2G}{\partial t^2}(t,s)+m^2G(t,s)
\right)h(s)\dif s=h(t).$$

Therefore, (\ref{eq1a}) is satisfied. 

Conditions $(V)$ and $(X)$ allow us to verify the contour condition:
$$u(T)-u(-T)=\int_{-T}^T\left[mG(T,-s)-\frac{\partial G}{\partial s}(T,s)-mG(-T,-s)+\frac{\partial G}{\partial s}(-T,s)\right]h(s)=0.$$
\end{proof}
\par
As the original Green function, $\ol G$ satisfies several properties.
\renewcommand{\labelenumi}{(\Roman{enumi}')}
\begin{pro} $\ol G$ satisfies the following properties:
\begin{enumerate}
\item $\frac{\partial \ol G}{\partial t}$ exists and is continuous in $\{(t,s)\in I^2\ |\ s\ne t\}$,
\item $\ol G(t,t^-)$ and $\ol G(t,t^+)$ exist for all $t\in I$ and satisfy $\ol G(t,t^-)-\ol G(t,t^+)=1\sfa t\in I$,
\item $\frac{\partial\ol G}{\partial t}(t,s)+m\ol G(-t,s)=0\text{ for a.e. }t,s\in I,\ s\ne t$,
\item $\ol G(T,s)=\ol G(-T,s)\sfa s\in (-T,T)$,
\item $\ol G(t,s)=\ol G(-s,-t)\sfa t,s\in I$.
\end{enumerate}
\end{pro}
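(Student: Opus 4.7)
The key first step is to use property $(X)$ to rewrite
\[
\ol{G}(t,s) \;=\; m\,G(t,-s) \,+\, \frac{\partial G}{\partial t}(t,s),
\]
which isolates the singular behaviour of $\ol G$ into the second summand: the first is continuous on all of $I^2$ by $(I)$, while the second is smooth off the diagonal $s = t$ and carries the unit jump across it prescribed by $(III)$.

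Properties $(I')$ and $(II')$ are then essentially automatic. Differentiating the display above in $t$ gives
\[
\frac{\partial \ol G}{\partial t}(t,s) \;=\; m\,\frac{\partial G}{\partial t}(t,-s) \,+\, \frac{\partial^2 G}{\partial t^2}(t,s),
\]
which exists and is continuous off the union $\{s = t\} \cup \{s = -t\}$ by $(II)$ (the anti-diagonal only appears through the first summand, via $-s = t$); the remaining anti-diagonal set is negligible, which is consistent with the ``a.e.'' qualifier in $(III')$. For $(II')$, the term $m\,G(t,-s)$ is continuous at $s = t$ by $(I)$, so the jump of $\ol G$ across the diagonal is entirely that of $\partial G/\partial t$ given by $(III)$, namely $1$.

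For $(III')$ I would substitute into the above display: property $(IX)$ applied with $s \mapsto -s$ gives $m(\partial G/\partial t)(t,-s) = -m(\partial G/\partial t)(-t,s)$, while the PDE $(IV)$ yields $\partial^2 G/\partial t^2(t,s) = -m^2 G(t,s) = -m^2 G(-t,-s)$ by $(VII)$. The resulting sum is exactly $-m\,\ol{G}(-t,s)$ as read off from the defining formula with $t$ replaced by $-t$. Property $(IV')$ splits along the same decomposition: $G(T,-s) = G(-T,-s)$ by $(V)(a)$, and $\partial G/\partial t(T,s) = \partial G/\partial t(-T,s)$ by $(V)(b)$. Finally, for $(V')$ I would compute
\[
\ol{G}(-s,-t) \;=\; m\,G(-s,t) \,+\, \frac{\partial G}{\partial t}(-s,-t),
\]
reduce the first summand to $m\,G(t,-s)$ via the symmetry $(VI)$, and chain $(IX)$, $(VIII)$ and $(X)$ to show $(\partial G/\partial t)(-s,-t) = (\partial G/\partial t)(t,s)$, matching the rewritten $\ol G(t,s)$.

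The only real obstacle is bookkeeping: $(III')$ combines three distinct identities for $G$, and $(V')$ a four-link chain of the derivative symmetries $(VI)$--$(X)$. None of the individual manipulations is subtle, but a sign or index slip at any point would wreck the match, so one must be vigilant about which of the ten listed properties of $G$ is being invoked at each step.
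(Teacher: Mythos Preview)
Your proof is correct. For $(I')$, $(II')$ and $(IV')$ the paper simply declares them ``straightforward from the analogous properties for function $G$,'' so your more explicit decomposition via $(X)$ just fills in what the authors leave to the reader; incidentally, your remark that the first summand $m\,\partial G/\partial t(t,-s)$ introduces a discontinuity on the anti-diagonal is accurate---strictly speaking $(I')$ should read $|s|\ne|t|$, as the paper itself tacitly acknowledges in the Remark following the Proposition. For $(V')$ your chain $(VI)$--$(IX)$--$(VIII)$--$(X)$ is exactly the paper's, up to a reordering forced by your having absorbed $(X)$ into the rewritten form of $\ol G$ at the outset.

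The one genuine divergence is $(III')$. You verify it pointwise by differentiating the rewritten $\ol G$ and invoking $(IV)$, $(VII)$, $(IX)$; the paper instead argues functionally: from the proof of Proposition~\ref{Greenf} one has $u'(t)=h(t)+\int_{-T}^T\frac{\partial\ol G}{\partial t}(t,s)h(s)\dif s$, and combining this with the equation $u'(t)+m\,u(-t)=h(t)$ yields $\int_{-T}^T\bigl[\frac{\partial\ol G}{\partial t}(t,s)+m\,\ol G(-t,s)\bigr]h(s)\dif s=0$ for every $h\in L^1(I)$, whence the bracket vanishes a.e. Your route is more elementary and entirely self-contained from the listed properties of $G$; the paper's route is softer---it never touches $(IV)$, $(VII)$ or $(IX)$ and relies only on the fact that $\ol G$ really is the Green's function---at the mild cost of importing a density argument and the differentiation-under-the-integral step already established in the previous proposition.
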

\begin{proof} Properties $(I')$, $(II')$ and $(IV')$ are straightforward from the analogous properties for function $G$.\par
$(III')$. In the proof of Proposition \ref{Greenf} we implicitely showed that function $u$ defined in (\ref{e-u}), and thus the unique solution of (\ref{eq1}), satisfies
$$u'(t)=h(t)+\int_{-T}^T\frac{\partial\ol G}{\partial t}(t,s)h(s)\dif s.$$
Hence, since $u'(t)-h(t)+m\, u(-t)=0$,
$$\int_{-T}^T\frac{\partial\ol G}{\partial t}(t,s)h(s)\dif s+m\int_{-T}^T\ol G(t,s)h(s)\dif s=0$$
this is,
$$\int_{-T}^T\left[\frac{\partial\ol G}{\partial t}(t,s)+m\ol G(t,s)\right]h(s)\dif s=0\text{ for all } h\in L^1(I),$$
and thus $$\frac{\partial\ol G}{\partial t}(t,s)+m\ol G(-t,s)=0\text{ for a.e. }t,s\in I,\ s\ne t.$$\par

$(IV')$.  This result is proven using properties $(VI)-(X)$:
\begin{align*}\ol G(-s,-t) & =mG(-s,t)-\frac{\partial G}{\partial s}(-s,-t)=mG(t,-s)+\frac{\partial G}{\partial t}(-s,-t) \\ & =mG(t,-s)-\frac{\partial G}{\partial t}(s,t)=mG(t,-s)-\frac{\partial G}{\partial s}(t,s)=\ol G(t,s).
\end{align*}
\end{proof}
\renewcommand{\labelenumi}{(\arabic{enumi})}
\begin{rem} Due to the expression of $G$ given in next section, properties $(II)$ and $(I')$ can be improved by adding that $G$ and $\ol G$ are analytic on $\{(t,s)\in I^2\ |\ s\ne t\}$ and $\{(t,s)\in I^2\ |\ |s|\ne |t|\}$ respectively.
\end{rem} 
\par Using properties $(II')-(V')$ we obtain the following corollary of Proposition \ref{Greenf}.
\begin{cor}
Suppose that $m \neq k \, \pi/T$, $k \in \bZ$. Then the problem 
\begin{subequations}
\begin{align}
 x'(t)+m\, x(-t)&=h(t),\nkp t\in I:=[-T,T],\\  x(-T)-x(T)&=\l,
\end{align}
\end{subequations}
with $\l\in\bR$ has a unique solution given by the expression
\begin{equation}
u(t):=\int_{-T}^T\ol G(t,s)h(s)\dif s+\l\ol G(t,-T).
\end{equation}
\end{cor}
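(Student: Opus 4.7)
The plan rests on the linearity of both the equation and the boundary condition, hence on superposition. Uniqueness is immediate from Proposition~\ref{Greenf}: the difference of any two solutions satisfies the homogeneous problem with $\l=0$, which under the spectral hypothesis $m\ne k\pi/T$ admits only the trivial solution. For existence, I would decompose the candidate as
$$u(t)=u_1(t)+u_2(t),\qquad u_1(t):=\int_{-T}^T\ol G(t,s)h(s)\dif s,\quad u_2(t):=\l\,\ol G(t,-T),$$
and handle each summand separately. By Proposition~\ref{Greenf}, $u_1$ satisfies \eqref{eq1a} together with $u_1(-T)=u_1(T)$, so the entire burden of the boundary datum $\l$ falls on $u_2$.

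To see that $u_2$ solves the homogeneous equation $u_2'(t)+m\,u_2(-t)=0$ pointwise a.e., I would fix $s=-T$ in property $(III')$: since $s=-T\ne t$ for almost every $t\in I$, one has $\partial_t\ol G(t,-T)+m\ol G(-t,-T)=0$. Multiplying by $\l$ and superposing with $u_1$ yields \eqref{eq1a} for $u$.

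The main obstacle is verifying that $u$ recovers the boundary datum exactly, that is, $u(-T)-u(T)=\l$. The contribution of $u_1$ vanishes by $(IV')$, so everything reduces to the corner identity $\ol G(-T,-T)-\ol G(T,-T)=1$. This lives precisely where the diagonal jump of $\ol G$ meets the boundary of $I^2$. I would derive it by interpreting $\ol G(t,-T)$ at $t=-T$ as the one-sided limit from the interior of $I$ and chaining the symmetries: by $(V')$, $\ol G(t,-T)=\ol G(T,-t)$, so this limit equals $\ol G(T,T^-)$; the jump $(II')$ then gives $\ol G(T,T^-)=\ol G(T,T^+)+1$; and the periodic identification in the second argument (consistent with $(IV')$ and with the periodic framework that makes $\ol G$ natural) yields $\ol G(T,T^+)=\ol G(T,-T)$, which is unambiguous since $-T\ne T$ keeps us off the diagonal. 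The chain produces the required identity.

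The subtle point is precisely this corner evaluation: the two properties $(II')$ and $(IV')$ individually only describe the jump of $\ol G(t,\cdot)$ in its second argument and the coincidence of the two sections $\ol G(T,\cdot)$ and $\ol G(-T,\cdot)$ on the open interval $(-T,T)$, so one has to check that they patch together compatibly at the corner $(-T,-T)$. Once this is handled, the corollary reduces to Proposition~\ref{Greenf} plus straightforward bookkeeping of the two contributions in the formula for~$u$.
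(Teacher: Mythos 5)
Your proposal is correct and follows essentially the route the paper intends when it states that the corollary follows ``using properties $(II')$--$(V')$'' of $\ol G$ together with Proposition~\ref{Greenf}: superposition, with $(III')$ giving the homogeneous equation for $\l\,\ol G(\cdot,-T)$ and the corner identity $\ol G(-T,-T)-\ol G(T,-T)=1$ supplying the boundary datum. You rightly flag the corner evaluation as the delicate point; your chain through $(V')$, $(II')$ and the periodic identification is sound, and it can be confirmed directly from the explicit formula (\ref{gbarra}), which gives $2\sin(mT)\,\ol G(t,-T)=\cos(mt)-\sin(mt)$ for $t\in(-T,T)$ and $2\sin(mT)\,\ol G(T,-T)=\cos(mT)-\sin(mT)$, so the difference of the limits is exactly $1$.
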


\section{Constant sign of function $\ol G$}

We will now give a result on the positivity or negativity of the Green's function for problem (\ref{eq1}). In order to achieve this, we need a new lemma and the explicit expression of the function $\ol G$.\par
Let $\a:=mT$ and $\ol G_\a$ be the Green's function for problem (\ref{eq1}) for a particular value of the parameter $\a$. Note that $\sign(\a)=\sign(m)$ because $T$ is always positive.

\begin{lem}\label{Gop} $\ol G_\a(t,s)=-\ol G_{-\a}(-t,-s)\sfa t,s\in I$.
\end{lem}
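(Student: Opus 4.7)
The plan is to unpack both sides using the formula $\ol G_\a(t,s)=m\,G(t,-s)-\frac{\partial G}{\partial s}(t,s)$ from Proposition \ref{Greenf} and then exploit the symmetries of $G$ proved in the previous proposition. The crucial preliminary observation is that the second-order problem \eqref{eq2} depends on $m$ only through $m^{2}$, so the Green's function $G$ is the \emph{same} for the parameters $\a=mT$ and $-\a=(-m)T$. Therefore the only sign that changes when $\a$ is replaced by $-\a$ is the explicit prefactor $m$ in the definition of $\ol G$; that is,
\[
\ol G_{-\a}(t,s)=-m\,G(t,-s)-\frac{\partial G}{\partial s}(t,s).
\]

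Next I would substitute $(-t,-s)$ for $(t,s)$ in this identity, obtaining
\[
\ol G_{-\a}(-t,-s)=-m\,G(-t,s)-\frac{\partial G}{\partial s}(-t,-s),
\]
and then simplify each term with the symmetries of $G$. From property $(VII)$, $G(-t,s)=G(t,-s)$. Differentiating the identity $G(t,s)=G(-t,-s)$ of property $(VII)$ with respect to $s$ yields $\frac{\partial G}{\partial s}(-t,-s)=-\frac{\partial G}{\partial s}(t,s)$ via the chain rule. Plugging both substitutions in gives
\[
\ol G_{-\a}(-t,-s)=-m\,G(t,-s)+\frac{\partial G}{\partial s}(t,s)=-\ol G_\a(t,s),
\]
which is the claimed identity.

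There is no real obstacle here; the only thing to be careful about is the sign produced by differentiating $(VII)$ in $s$ (an easy chain-rule step) and to spell out explicitly that $G$ depends on the parameter only through $m^{2}$, so that changing the sign of $m$ affects only the explicit coefficient in the formula for $\ol G$, not $G$ itself.
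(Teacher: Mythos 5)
Your proof is correct, but it takes a genuinely different route from the paper's. The paper argues at the level of solutions: if $u(t)=\int_{-T}^T\ol G_\a(t,s)h(s)\,\dif s$ solves \eqref{eq1}, then $v(t):=-u(-t)$ satisfies $v'(t)-m\,v(-t)=h(-t)$ with the same periodic condition, so $v$ admits the representation with kernel $\ol G_{-\a}$ and datum $h(-\cdot)$; comparing this with the expression of $v$ obtained directly from $u$ and using the arbitrariness of $h$ yields the kernel identity. You instead compute directly from the formula $\ol G_\a(t,s)=m\,G(t,-s)-\frac{\partial G}{\partial s}(t,s)$ of Proposition \ref{Greenf}, after the key (and correct) observation that $G$ depends on $m$ only through $m^2$ and hence is unchanged under $m\mapsto-m$; the conclusion then follows from property $(VII)$ and its $s$-derivative, both available at this point in the paper. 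Your sign bookkeeping checks out: $\ol G_{-\a}(-t,-s)=-m\,G(-t,s)-\frac{\partial G}{\partial s}(-t,-s)=-m\,G(t,-s)+\frac{\partial G}{\partial s}(t,s)=-\ol G_\a(t,s)$. The trade-off is that your argument is shorter and makes explicit exactly which symmetries of $G$ drive the identity, but it is tied to the particular representation of $\ol G$ in terms of $G$; the paper's transformation argument uses only the defining property of the Green's function (uniqueness of the integral representation) and would survive in settings where no such explicit construction is at hand.
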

\begin{proof}
Let $u(t):=\int_{-T}^T\ol G_\a(t,s)h(s)\dif s$ be a solution to (\ref{eq1}). Let $v(t):=-u(-t)$. Then $v'(t)-m\, v(-t)=u'(-t)+m\, u(t)=h(-t)$, and therefore $v(t)=\int_{-T}^T\ol G_{-\a}(t,s)h(-s)\dif s$. On the other hand, by definition of $v$,
$$v(t)=-\int_{-T}^T\ol G_\a(-t,s)h(s)\dif s=-\int_{-T}^T\ol G_\a(-t,-s)h(-s)\dif s,$$
therefore we can conclude that $\ol G_\a(t,s)=-\ol G_{-\a}(-t,-s)$ for all $t,\;s\in I$.
\end{proof}
\begin{cor} $\ol G_\a$ is positive if and only if $\ol G_{-\a}$ is negative on $I^2$.
\end{cor}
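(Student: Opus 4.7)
The plan is to invoke Lemma \ref{Gop} directly; the corollary is essentially a sign reading of the pointwise identity it supplies. First I would record the key symmetry of the domain: since $I = [-T,T]$ is symmetric about the origin, the map $\sigma:(t,s)\mapsto(-t,-s)$ is an involution of $I^2$ onto itself.

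Next, I would argue the forward implication. Assume $\ol G_\a(t,s) > 0$ for every $(t,s) \in I^2$ (interpreted off the discontinuity set of $\ol G_\a$, since the Green's function has a jump on the diagonal). By Lemma \ref{Gop}, for any $(t,s)\in I^2$,
\[
\ol G_{-\a}(t,s) = -\ol G_\a(-t,-s).
\]
Because $\sigma$ maps $I^2$ bijectively onto itself, $(-t,-s)$ ranges over all of $I^2$ as $(t,s)$ does; hence $\ol G_\a(-t,-s) > 0$, and consequently $\ol G_{-\a}(t,s) < 0$ on $I^2$.

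The reverse implication is handled by the same argument applied with the roles of $\a$ and $-\a$ exchanged, using the involutive property $\sigma\circ\sigma = \Id$ (equivalently, applying Lemma \ref{Gop} with $-\a$ in place of $\a$ yields $\ol G_{-\a}(t,s) = -\ol G_\a(-t,-s)$, the same identity). No real obstacle appears: the only point worth flagging is that the Green's function is defined off the diagonal $s = t$ and has a unit jump there (property $(II')$), so the statement should be read as constancy of sign on $I^2$ away from this set of measure zero; $\sigma$ preserves this set, so the argument goes through verbatim.
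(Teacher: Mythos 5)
Your proposal is correct and is exactly the argument the paper intends: the corollary is stated without proof as an immediate consequence of Lemma \ref{Gop}, and your observation that $(t,s)\mapsto(-t,-s)$ is a bijection of $I^2$ onto itself is precisely what makes the pointwise identity $\ol G_{-\a}(t,s)=-\ol G_\a(-t,-s)$ yield the equivalence of signs. Your remark about the diagonal is a reasonable precaution, though the paper later fixes a convention for $\ol G$ on the diagonal, so no measure-zero caveat is ultimately needed.
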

With this corollary, to make a complete study of the positivity and negativity of the function, it is enough to find out for what values $\a\in\bR^+$ function $\ol G$ is positive and for which is not. This will be very useful to state maximum and anti-maximum principles for (\ref{eq1}) due to the way we express its solution as an integral operator with kernel $\ol G$.\par
Using the algorithm described in \cite{Cab2} we can obtain the explicit expression of $G$:
\begin{equation}
2m\sin(mT)G(t,s)=\begin{cases} \cos m(T+s-t) & \text{if}\quad s\le t,\\\cos m(T-s+t) & \text{if}\quad s>t.\end{cases}
\end{equation}
Therefore,
\begin{equation}\label{gbarra}
2\sin(mT)\ol G(t,s)=\begin{cases} \cos m(T-s-t)+\sin m(T+s-t) & \text{if}\quad -t\le s<t,\\\cos m(T-s-t)-\sin m(T-s+t) & \text{if}\quad -s\le t<s,\\\cos m(T+s+t)+\sin m(T+s-t) & \text{if}\quad -|t|>s,\\\cos m(T+s+t)-\sin m(T-s+t) & \text{if}\quad t<-|s|.\end{cases}
\end{equation}
Realize that $\ol G$ is continuous in $\{(t,s)\in I^2\ |\ t\ne s\}$. Making the change of variables $t=Tz$, $s=Ty$, we can simplify this expression to
\begin{equation}
2\sin(\a)\ol G(z,y)=\begin{cases} \cos \a(1-y-z)+\sin \a(1+y-z) & \text{if}\quad -z\le y <z,\\\cos \a(1-y-z)-\sin \a(1-y+z) & \text{if}\quad -y\le z <y,\\\cos \a(1+y+z)+\sin \a(1+y-z) & \text{if}\quad -|z|>y,\\\cos \a(1+y+z)-\sin \a(1-y+z) & \text{if}\quad z<-|y|.\end{cases}
\end{equation}
Using the trigonometric identity $\cos(a-b)\pm\sin(a+b)=(\cos a\pm\sin a)(\cos b\pm\sin b)$, we can factorise this expression as follows:
\begin{equation}\label{eqgb}
2\sin(\a)\ol G(z,y)=
\begin{cases}
[\cos\a(1-z)+\sin\a(1-z)][\sin\a y+\cos\a y]& \text{if}\quad -z\le y <z, \\
[\cos\a z-\sin\a z][\sin\a(y-1)+\cos\a(y-1)] & \text{if}\quad -y\le z <y, \\
[\cos\a(1+y) +\sin\a(1+y)][\cos\a z-\sin\a z]& \text{if}\quad -|z|>y, \\
[\cos\a y+\sin\a y][\cos\a(z+1)-\sin\a(z+1)]& \text{if}\quad z<-|y|.
\end{cases}
\end{equation}
Note that
\begin{subequations}\label{trif}
\begin{align}
\cos\xi+\sin\xi>0 &\sfa\xi\in\left(2k\pi-\frac{\pi}{4},\ 2k\pi+\frac{3\pi}{4}\right),\ k\in\bZ \\
\cos\xi+\sin\xi<0 &\sfa\xi\in\left(2k\pi+\frac{3\pi}{4},\ 2k\pi+\frac{7\pi}{4}\right),\ k\in\bZ \\
\cos\xi-\sin\xi>0 &\sfa\xi\in\left(2k\pi-\frac{3\pi}{4},\ 2k\pi+\frac{\pi}{4}\right),\ k\in\bZ \\
\cos\xi-\sin\xi<0 &\sfa\xi\in\left(2k\pi+\frac{\pi}{4},\ 2k\pi+\frac{5\pi}{4}\right),\ k\in\bZ
\end{align}
\end{subequations}
\begin{figure}[hht]
\center{\includegraphics[width=\textwidth]{fig2.png}}\caption{Plot of the function $\ol G(z,y)$ for $\a=\frac{\pi}{4}$.}
\end{figure}\par
As we have seen, the Green's function $\ol G$ is not defined on the diagonal of $I^2$. For easier manipulation, we will define it in the diagonal as follows:
$$G(t,t)=\begin{cases}\lil_{s\to t^+}G(t,s)\quad\text{if}\quad m>0\\\lil_{s\to t^-}G(t,s)\quad\text{if}\quad m<0\end{cases}\text{for}\nkp t\in(-T,T);$$$$ \quad G(T,T)=\lil_{s\to T^-}G(s,s),\quad G(-T,-T)=\lil_{s\to -T^+}G(s,s)$$
Using expression (\ref{eqgb}) and formulae (\ref{trif}) we can prove the following theorem.
\begin{thm}\label{alphasign}\ \par
\begin{enumerate}
\item If $\a\in(0,\frac{\pi}{4})$ then $\ol G$ is strictly positive on $I^2$.
\item If $\a\in(-\frac{\pi}{4},0)$ then $\ol G$ is strictly negative on $I^2$.
\item If $\a=\frac{\pi}{4}$ then $\ol G$ vanishes on $P:=\{(-T,-T),(0,0),(T,T),(T,-T)\}$ and is strictly positive on $(I^2)\backslash P$.
\item If $\a=-\frac{\pi}{4}$ then $\ol G$ vanishes on $P$ and is strictly negative on $(I^2)\backslash P$.
\item If $\a\in\bR\backslash[-\frac{\pi}{4},\frac{\pi}{4}]$ then $\ol G$ is not positive nor negative on $I^2$.
\end{enumerate}
\end{thm}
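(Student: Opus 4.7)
\medskip

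\noindent\textbf{Proof plan.} My plan is to work directly from the factorisation (\ref{eqgb}) together with the sign chart (\ref{trif}), performing a case analysis on $\alpha=mT$. Every factor in (\ref{eqgb}) has the shape $\cos(\alpha\xi)\pm\sin(\alpha\xi)$ for some linear expression $\xi$ in $z$ and $y$ whose range, on each of the four triangles partitioning $I^{2}$, is an explicit subinterval of $[-1,1]$. Bounding the argument $\alpha\xi$ and reading off signs from (\ref{trif}) reduces the theorem to a finite check.

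For case~1, $\alpha\in(0,\pi/4)$, every argument $\alpha\xi$ appearing in regions~1--4 lies in one of $[-\alpha,\alpha]\subset(-\pi/4,\pi/4)$, $[0,\alpha]\subset[0,\pi/4)$ or $[-\alpha,0]\subset(-\pi/4,0]$. Each such interval sits strictly inside the positivity regime for the relevant $\cos\pm\sin$ provided by (\ref{trif}), so all eight factors are strictly positive on their regions; combined with $\sin\alpha>0$ this yields $\overline G>0$ on $I^{2}$. Case~3 ($\alpha=\pi/4$) is the same check, but now the argument intervals are allowed to touch the endpoints of the positivity intervals in (\ref{trif}), so each factor can vanish at exactly one boundary value of its variable. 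Tracking those boundary zeros together with the diagonal convention fixing $\overline G$ on $\{t=s\}$ shows that the zero set on $I^{2}$ is exactly $P$ and that $\overline G>0$ off $P$.

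Cases~2 and~4 ($\alpha\in(-\pi/4,0)$ and $\alpha=-\pi/4$) are reduced to cases~1 and~3 via Lemma~\ref{Gop}, since $\overline G_{\alpha}(t,s)=-\overline G_{-\alpha}(-t,-s)$ flips both the sign of the function and the location of its zeros under $(t,s)\mapsto(-t,-s)$. For case~5 with $\alpha>\pi/4$, it suffices to exhibit two points of opposite sign. On the segment $\{(z,1):z\in[-1,1)\}$, which lies in region~2, the formula (\ref{eqgb}) gives
\[
\overline G(z,1)=\frac{\cos\alpha z-\sin\alpha z}{2\sin\alpha}=\frac{\sqrt{2}\cos(\alpha z+\pi/4)}{2\sin\alpha}.
\]
This vanishes at $z_{0}=\pi/(4\alpha)\in(0,1)$ with nonzero $z$-derivative $-\alpha(\sin\alpha z_{0}+\cos\alpha z_{0})=-\alpha\sqrt{2}$, so $\overline G(z_{0}-\varepsilon,1)$ and $\overline G(z_{0}+\varepsilon,1)$ have opposite signs for all small $\varepsilon>0$. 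The range $\alpha<-\pi/4$ is handled by Lemma~\ref{Gop} once more.

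The main obstacle I expect is the bookkeeping in case~3: one must check simultaneously that none of the eight factors has an extraneous zero in the strict interior of its region, and that the diagonal convention used to define $\overline G$ on $\{t=s\}$ contributes precisely the points $(0,0)$ and $(\pm T,\pm T)$ (and the off-diagonal corner $(T,-T)$ from region~1), with no omission or spurious extra zero. Once this finite verification is done, the remaining cases are either a direct reading of (\ref{trif}) or an application of the symmetry Lemma~\ref{Gop}.
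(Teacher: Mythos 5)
Your proposal is correct and follows essentially the same route as the paper: reduce to $\a>0$ via Lemma \ref{Gop}, then read off the signs of the factors in (\ref{eqgb}) region by region using (\ref{trif}) (the paper packages this as the inclusion $A\subset B\cup C$ and solves for $\a$, which simultaneously yields the failure of constant sign for $\a>\frac{\pi}{4}$, where you instead exhibit an explicit sign change on the segment $y=1$). Your explicit attention to the boundary zeros and the diagonal convention in the case $\a=\pm\frac{\pi}{4}$ is, if anything, more careful than the paper's ``analogous fashion'' remark.
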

\begin{proof}
Lemma \ref{Gop} allows us to restrict the proof to the positive values of $\a$.\par We study here the positive values of $\ol G(z,y)$ in $A:=\{(z,y)\in[-1,1]^2\ |\ z\ge|y|\}$. The rest of cases are done in an analogous fashion. Let
\begin{align*}
B_1 & :=\stackrel[k_1\in\bZ]{}{\bigcup}\(1-\frac{\pi}{\a}\(2k_1+\frac{3}{4}\),1-\frac{\pi}{\a}\(2k_1-\frac{1}{4}\)\),\ B_2:=\stackrel[k_2\in\bZ]{}{\bigcup}\frac{\pi}{\a}\(2k_2-\frac{1}{4},2k_2+\frac{3}{4}\),\\
C_1 & :=\stackrel[k_1\in\bZ]{}{\bigcup}\(1-\frac{\pi}{\a}\(2k_1+\frac{7}{4}\),1-\frac{\pi}{\a}\(2k_1+\frac{3}{4}\)\),\ 
C_2:=\stackrel[k_2\in\bZ]{}{\bigcup}\frac{\pi}{\a}\(2k_2+\frac{3}{4},2k_2+\frac{7}{4}\),\\
B & :=\{(z,y)\in B_1\times B_2\ |\ z>|y|\},\quad\text{and}\quad C:=\{(z,y)\in C_1\times C_2\ |\ z>|y|\}.
\end{align*}
Realize that $B\cap C=\emptyset$. Moreover, we have that $\ol G(z,y)> 0$ on $A$ if and only if $A\subset B\cup C$.\par To prove the case $A\subset B$, it is a necessary and sufficient condition that $[-1,1]\subset B_2$ and $[0,1]\subset{B_1}$.\par
$[-1,1]\subset B_2$ if and only if $k_2\in\frac{1}{2}(\frac{\a}{\pi}-\frac{3}{4},\frac{1}{4}-\frac{\a}{\pi})$ for some $k_2\in\bZ$, but, since $\a>0$, this only happens if $k_2=0$. In such a case $[-1,1]\subset\frac{\pi}{4\a}(-1,3)$, which implies $\a<\frac{\pi}{4}$. Hence, $\frac{\pi}{\a}>4$, so $[0,1]\subset(1-\frac{3}{4}\frac{\pi}{\a},1+\frac{1}{4}\frac{\pi}{\a})=\(1-\frac{\pi}{\a}\(2k_1+\frac{3}{4}\),1-\frac{\pi}{\a}\(2k_1-\frac{1}{4}\)\)$ for $k_1=0$. Therefore $A\subset B$.\par
We repeat this study for the case $A\subset C$ and all the other subdivisions of the domain of $\ol G$, proving the statement.
\end{proof}
The following definitions \cite{Cab3} lead to a direct corollary of Theorem \ref{alphasign}.
\begin{dfn} Let $\cF_\l(I)$ be the set of real differentiable functions $f$ defined on $I$ such that $f(-T)-f(T)=\l$. A linear operator $R:\cF_\l(I)\to L^1(I)$ is said to be
\begin{enumerate}
\item \textbf{strongly inverse positive on $\cF_\l(I)$} if $Rx\succ0\Ra x > 0\sfa x\in \cF_\l(I)$,
\item \textbf{strongly inverse negative on $\cF_\l(I)$} if $Rx\succ0\Ra x <0\sfa x\in \cF_\l(I)$.
\end{enumerate}
Where $x\succ0$ and $x\prec0$ stand for $x>0$ and $x<0$ a.e. respectively.
\end{dfn}

\begin{cor}\label{coralphasign} The operator $R_m:\cF_\l(I)\to L^1(I)$ defined as $R_m(x(t))=x'(t)+m\, x(-t)$, with $m\in\bR\backslash\{0\}$, satisfies
\begin{enumerate}
\item $R_m$ is strongly inverse positive if and only if $m\in(0,\frac{\pi}{4T}]$ and $\l\ge0$,
\item $R_m$ is strongly inverse negative if and only if $m\in[-\frac{\pi}{4T},0)$ and $\l\ge0$.

\end{enumerate}
\end{cor}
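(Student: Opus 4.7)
The proof rests on three earlier ingredients: the solution formula
\[
u(t) = \int_{-T}^T \ol G(t,s) h(s)\,\dif s + \l\, \ol G(t,-T)
\]
from the Corollary to Proposition \ref{Greenf}; the sign classification of $\ol G$ in Theorem \ref{alphasign}; and the reflection identity of Lemma \ref{Gop}. The latter reduces part 2 to part 1 via $m \mapsto -m$, so I focus on the strongly inverse positive case.

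For \emph{sufficiency}, assume $m \in (0, \pi/(4T)]$ and $\l \ge 0$, and let $h := R_m x \succ 0$. By Theorem \ref{alphasign} (cases 1 and 3), $\ol G \ge 0$ on $I^2$, with zero set contained in the four-point set $P$. Hence, for each $t \in I$, the slice $\ol G(t,\cdot)$ is nonnegative and has at most finitely many zeros in $[-T, T]$, so the integrand $\ol G(t,\cdot) h(\cdot)$ is strictly positive almost everywhere, giving $\int_{-T}^T \ol G(t,s) h(s)\,\dif s > 0$. Combined with $\l\, \ol G(t,-T) \ge 0$ this yields $x(t) > 0$ for all $t \in I$.

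For \emph{necessity}, I argue contrapositively: if either $m \notin (0, \pi/(4T)]$ or $\l < 0$, I exhibit $h \succ 0$ whose associated $x$ fails strict positivity. If $m < 0$ or $m > \pi/(4T)$, Theorem \ref{alphasign} (cases 2, 4, 5) supplies $(t_0, s_0)$ with $\ol G(t_0, s_0) < 0$, hence a neighbourhood $U \subset I$ of $s_0$ on which $\ol G(t_0, \cdot) \le -c < 0$. Setting $h_N := N \chi_U + 1$ keeps $h_N \succ 0$, while
\[
\int_{-T}^T \ol G(t_0, s) h_N(s)\,\dif s \;\le\; -cN|U| + \|\ol G(t_0,\cdot)\|_{L^1(I)} \xrightarrow[N\to\infty]{} -\infty,
\]
and the boundary term $\l\, \ol G(t_0, -T)$ is fixed, so $x(t_0) < 0$ for $N$ large. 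If instead $m \in (0, \pi/(4T)]$ and $\l < 0$, any interior $t_0 \in (-T, T)$ satisfies $\ol G(t_0, -T) > 0$ by Theorem \ref{alphasign}; taking $h \equiv \e > 0$ makes the integral $O(\e)$, while the fixed term $\l\, \ol G(t_0, -T) < 0$ forces $x(t_0) < 0$ for $\e$ sufficiently small.

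The only mildly delicate step is the extremal case $\a = \pi/4$ in sufficiency, where one must confirm that the finite zero set $P$ does not obstruct strict positivity of $\int \ol G(t,s) h(s)\,\dif s$; this is exactly the slice-wise observation noted above, namely that $\ol G(t,\cdot)$ vanishes at most at one point of $[-T,T]$ for each fixed $t$. Beyond this check, the argument is a careful sign audit of the representation formula.
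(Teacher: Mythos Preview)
Your proof is correct and is precisely the natural elaboration of what the paper leaves implicit: the paper states only that this is ``a direct corollary of Theorem~\ref{alphasign}'' and gives no argument, so your write-up \emph{is} essentially the intended proof, carried out in full via the representation formula and the sign classification of $\ol G$. One cosmetic slip: in your closing paragraph you say $\ol G(t,\cdot)$ vanishes at most at one point, but for $t=T$ (when $\a=\pi/4$) both $(T,T)$ and $(T,-T)$ lie in $P$; your earlier ``at most finitely many zeros'' is the accurate phrasing, and the argument is unaffected.
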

This last corollary establishes a maximum and anti-maximum principle (cf. \cite[Lemma~2.5, Remark~2.3]{Cab3}).
\par
The function $\ol G$ has a fairly convoluted expression which does not allow us to see in a straightforward way its dependence on $m$. This dependency can be analyzed, without computing and evaluating the derivative with respect to $m$, just using the properties of equation (\ref{eq1}a) in those regions where the operator $R$ is inverse positive or inverse negative. A different method to the one used here but pursuing a similar purpose can be found in \cite[Lemma 2.8]{Cab1} for the Green's function related to the second order Hill's equation.
\begin{pro} Let $G_{m_i}:I\to\bR$ be the Green's function and $u_i$ the solution to the problem (\ref{eq1}) with constant $m=m_i,\ i=1,2$ respectively. Then the following assertions hold.
\begin{enumerate}
\item If $0< m_1<m_2\le\frac{\pi}{4T}$ then $u_1>u_2$ on $I$ for every $h\succ 0$ and $G_{m_1}>G_{m_2}$ on $I^2$.
\item If $-\frac{\pi}{4T}\le m_1<m_2<0$ then $u_1>u_2$ on $I$ for every $h\succ 0$ and $G_{m_1}>G_{m_2}$ on $I^2$.
\end{enumerate}\par
\end{pro}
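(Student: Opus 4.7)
The approach is to rewrite the difference $u_1-u_2$ as the solution of an auxiliary problem of the same form (\ref{eq1}) and to apply the inverse-positivity result of Corollary \ref{coralphasign} twice.

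First, subtracting the equations $u_i'(t)+m_i u_i(-t)=h(t)$ for $i=1,2$, and adding and subtracting $m_1 u_2(-t)$, yields
\[
R_{m_1}(u_1-u_2)(t) = (u_1-u_2)'(t)+m_1(u_1-u_2)(-t) = (m_2-m_1)\, u_2(-t).
\]
Since both $u_i$ satisfy (\ref{cn1}), one has $u_1-u_2\in\cF_0(I)$. In case (1), $0<m_2\le\pi/(4T)$ and $h\succ0$, so Corollary \ref{coralphasign} (with $\lambda=0$) gives $u_2>0$ on $I$. Hence the right-hand side $(m_2-m_1)u_2(-t)$ is strictly positive, and since $0<m_1<\pi/(4T)$, a second application of Corollary \ref{coralphasign} yields $u_1-u_2>0$. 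Case (2) is symmetric: the strongly inverse-negative part of the same corollary gives $u_2<0$, so $(m_2-m_1)u_2(-t)<0$; by linearity, strong inverse-negativity upgrades to the implication $R_{m_1}(x)\prec0\Rightarrow x>0$, which gives $u_1-u_2>0$ again.

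For the Green's function inequality, fix $s\in I$ and set $\phi_s(t):=\ol G_{m_1}(t,s)-\ol G_{m_2}(t,s)$. By property $(II')$ both kernels have the same unit jump across the diagonal, so $\phi_s$ is continuous in $t$ on all of $I$; using $(III')$ and $(IV')$,
\[
R_{m_1}(\phi_s)(t) = (m_2-m_1)\,\ol G_{m_2}(-t,s), \qquad \phi_s(-T)=\phi_s(T).
\]
Rather than invoking Corollary \ref{coralphasign} directly (since $\phi_s'$ has a corner at $t=-s$), use Proposition \ref{Greenf} to write
\[
\phi_s(t) = (m_2-m_1)\int_{-T}^{T} \ol G_{m_1}(t,\tau)\,\ol G_{m_2}(-\tau,s)\,\dif\tau.
\]
In case (1), Theorem \ref{alphasign}(1) gives $\ol G_{m_1}(t,\tau)>0$ strictly on $I^2$ (because $m_1<\pi/(4T)$), and $\ol G_{m_2}(-\tau,s)\ge 0$ on $I^2$, vanishing at worst on a null set in $\tau$ when $m_2=\pi/(4T)$. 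Combined with $m_2-m_1>0$, the integrand is non-negative and strictly positive for almost every $\tau$, so $\phi_s(t)>0$; case (2) is analogous.

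The main obstacle is the minor regularity issue in the Green's function step: the difference $\phi_s$ is continuous but its derivative has a jump at $t=-s$, so Corollary \ref{coralphasign} does not apply verbatim to $\phi_s$. Passing through the integral representation sidesteps this completely and also takes care of the boundary subcase $m_2=\pm\pi/(4T)$, since the exceptional set $P$ of Theorem \ref{alphasign} has measure zero in the $\tau$ variable.
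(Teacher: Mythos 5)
Your derivation of $u_1>u_2$ is essentially the paper's own: both arguments subtract the two equations to get $R_{m_1}(u_1-u_2)=(m_2-m_1)\,u_2(-t)$ with homogeneous periodic data and then apply Corollary \ref{coralphasign} twice (once to sign $u_2$, once to sign the difference), including the linearity trick in the inverse-negative case. For the kernel inequality $\ol G_{m_1}>\ol G_{m_2}$, however, you take a genuinely different route. The paper first extracts the a.e.\ inequality $0\prec \ol G_{m_2}\prec \ol G_{m_1}$ from the solution comparison, then fixes $s$, takes the $2T$-periodic extensions $v_i$ of $\ol G_{m_i}(\cdot,s)$, differentiates once more to obtain $v_i''+m_i^2v_i=0$ on $I_s=(s,s+2T)$, and invokes the strong inverse positivity of the second-order periodic operator $v''+m_1^2v$ (imported from \cite{Cab1,Cab3}) to upgrade to a strict pointwise inequality; case (2) is then deduced from case (1) via Lemma \ref{Gop}. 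You instead stay entirely inside the first-order framework: the identity $R_{m_1}(\phi_s)=(m_2-m_1)\,\ol G_{m_2}(-\cdot,s)$, the cancellation of the diagonal jumps making $\phi_s$ absolutely continuous, and the uniqueness in Proposition \ref{Greenf} give the closed form $\phi_s(t)=(m_2-m_1)\int_{-T}^{T}\ol G_{m_1}(t,\tau)\,\ol G_{m_2}(-\tau,s)\,\dif\tau$, whose integrand Theorem \ref{alphasign} signs a.e.\ in $\tau$. This buys three things: the argument is self-contained (no appeal to the Hill-equation comparison principle); the endpoint values $m=\pm\pi/(4T)$ are handled automatically since the exceptional set $P$ meets each line $\{s=\mathrm{const}\}$ in finitely many points; and, most importantly, it sidesteps a regularity point the paper glosses over --- by $(III')$ the $t$-derivative of $\ol G_m(\cdot,s)$ jumps by an $m$-dependent amount across $t=-s$, so the difference $v_2-v_1$ is \emph{not} continuously differentiable in the interior of $I_s$ as the paper asserts, and its distributional second derivative carries a point mass there. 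Your detour through the integral representation is the clean way to close exactly that gap, so the proposal is not only correct but arguably tighter than the printed proof.
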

\begin{proof}
(1). Let $h\succ 0$ in equation (\ref{eq1}a). Then $u_i>0\ i=1,2$. We have that
$$u_i'(t)+m_iu_i(-t)=h(t)\quad i=1,2.$$

Therefore
\begin{equation*}\begin{aligned}
0=(u_2-u_1)'(t)+m_2u_2(-t)-m_1u_1(-t) & >(u_2-u_1)'(t)+m_1(u_2-u_1)(-t),\\
(u_2-u_1)(T) & -(u_2-u_1)(-T)=0,
\end{aligned}\end{equation*}
and hence, from Corollary \ref{coralphasign}, $u_2<u_1$ on $I$.\par
On the other hand, for all $t\in I$, it is satisfied that
\begin{equation}\label{u1u2} 
0>(u_{2}-u_{1})(t)=\int_{-T}^T(G_{m_2}(t,s)-G_{m_1}(t,s))h(s)\dif s\sfa h\succ0,
\end{equation}
This makes clear that $0\prec G_{m_2}\prec G_{m_1}$ a.e. on $I^2$.\par
To prove that $G_{m_2}<G_{m_1}$ on $I^2$, let $s\in I$ be fixed, and define $v_i: \bR \to \bR$ as the $2\, T$-- periodic extension to the whole real line of $G_{m_i}(\cdot,s)$.\par
Using $(I')$ -- $(IV')$, we have that $v_2-v_1$ is a continuosly differentiable function on $I_s \equiv (s,s+2\,T)$. Futhermore, it is clear that $(v_2-v_1)'$ is absolutely continuous on $I_s$. Using $(III')$, we have that $$(v_2-v_1)'(t)+m_2v_2(-t)-m_1v_1(-t) =0\quad\text{on } I_s.$$ As consequence,
 $v_i''(t)+m_i^2\,v_i(t)=0$ a.e. on $I_s$. Moreover, using $(II')$ and $(IV')$ we know that 
$$(v_2-v_1)(s)=(v_2-v_1)(s+2\,T), \quad (v_2-v_1)'(s)=(v_2-v_1)'(s+2\,T).$$

Hence, for all $t \in I_s$, we have that
$$0=(v_2-v_1)''(t)+m_2^2\,v_2(t)-m_1^2\,v_1(t) >(v_2-v_1)''(t)+m_1^2\,(v_2-v_1)(t).$$

The periodic boundary value conditions, together the fact that for this range of values of $m_1$, operator $v''+m^2_1\, v$ is strongly inverse positive (see \cite{Cab3, Cab1}), we conclude that $v_2<v_1$ on $I_s$, this is, $G_{m_2}(t,s)<G_{m_1}(t,s)$ for all $t,\, s \in I$.\par
(2). This is straightforward using part (1), Lemma \ref{Gop} and Theorem \ref{alphasign}:
$$G_{m_2}(t,s)=-G_{-m_2}(-t,-s)<-G_{-m_1}(-t,-s)=G_{m_1}(t,s)<0\sfa t,s\in I.$$
By equation (\ref{u1u2}), $u_2<u_1$ on $I$.\par
\end{proof}
\begin{rem} In (1) and (2) we could have added that $u_1<u_2\sfa h\prec0$. These are straightforward consequences of the rest of the proposition.
\end{rem}
\section{Applications}

This section is devoted to point out some applications of the given results to the existence of solutions of nonlinear periodic boundary value problems. Due to the fact that the proofs follow similar steps to the ones given in previous papers, we omit them.

\subsection{Lower and upper solutions method}
Lower and upper solutions methods are a variety of widespread techniques that supply information about the existence --and sometimes construction-- of solutions of differential equations. Depending on the particular type of differential equation and the involved boundary value conditions, it is subject to these techniques change but are in general suitable --with proper modifications-- to other cases.\par
For this application we will follow the steps in \cite{Cab3} and use Corollary \ref{coralphasign} to establish conditions under which the more general problem
\begin{equation}\label{eqgenpro}
x'(t)  =f(t,x(-t))\sfa t\in I,\quad
x(-T) =x(T),
\end{equation}
has a solution. Here $f:I\times\bR\to\bR$ is a \textbf{Carath\'eodory function}, that is, $f(\cdot,x)$ is measurable for all $x\in\bR$, $f(t,\cdot)$ is continuous for a.e. $t\in I$, and for every $R >0$, there exists $h_R\in L^1(I)$ such that, if with $|x|<R$ then
$$|f(t,x)|\le h_R(t)\quad \text{for a.e. }t\in I.$$

\begin{dfn}We say that $\a\in AC(I)$ is a \textbf{lower solution} of (\ref{eqgenpro}) if $\a$ satisfies
\begin{equation*}
\a'(t)  \ge f(t,\a(-t))\quad \text{for a.e. } t\in I,\quad
\a(-T)-\a(T)\ge 0.
\end{equation*}
\end{dfn}
\begin{dfn}We say that $\b\in AC(I)$ is an \textbf{upper solution} of (\ref{eqgenpro}) if $\b$ satisfies
\begin{equation*}
\b'(t) \le f(t,\b(-t))\quad \text{for a.e. } t\in I,\quad
\b(-T)-\b(T)\le 0.
\end{equation*}
\end{dfn}

We establish now a theorem that proves the existence of solutions of (\ref{eqgenpro}) under some conditions. The proof follows the same steps of \cite[Theorem 3.1]{Cab3} and we omit it here.

\begin{thm} \label{thmlu}Let $f:I\times\bR\to\bR$ be a Carath\'eodory function. If there exist $\a\ge\b$ lower and upper solutions of (\ref{eqgenpro}) respectively and $m\in(0,\frac{\pi}{4T}]$ such that
\begin{equation}\label{condul1}
f(t,x)-f(t,y)\ge-m(x-y)\quad\text{for a.e. } t\in I\text{ with }\b(t)\le y\le x\le\a(t),
\end{equation}
then there exist two monotone sequences $(\a_n)_\n$, $(\b_n)_\n$, non-increasing and non-decreasing respectively, with $\a_0=\a$, $\b_0=\b$, which converge uniformly to the extremal solutions in $[\b,\a]$ of (\ref{eqgenpro}).\par
Furthermore, the estimate $m=\frac{\pi}{4T}$ is best possible in the sense that there are problems with its unique solution outside of the interval $[\b,\a]$ for $m>\frac{\pi}{4T}$.
\end{thm}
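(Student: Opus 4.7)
My plan is to adapt the classical monotone iterative technique to the non-local setting, using the linear theory from Section~3 as the building block. The idea is to rewrite the nonlinear equation as a perturbed linear equation with reflection: add $m\,x(-t)$ to both sides of $x'(t)=f(t,x(-t))$ to obtain $x'(t)+m\,x(-t)=f(t,x(-t))+m\,x(-t)$, and then freeze the right-hand side at the previous iterate. Concretely, starting from $\a_0:=\a$ and $\b_0:=\b$, I would define $\a_{n+1}$ and $\b_{n+1}$ as the unique solutions (provided by Proposition~\ref{Greenf}, since $m\in(0,\pi/(4T)]$ avoids the resonant set) of the linear problems
\begin{align*}
\a_{n+1}'(t)+m\,\a_{n+1}(-t) & =f(t,\a_n(-t))+m\,\a_n(-t),\quad \a_{n+1}(-T)=\a_{n+1}(T),\\
\b_{n+1}'(t)+m\,\b_{n+1}(-t) & =f(t,\b_n(-t))+m\,\b_n(-t),\quad \b_{n+1}(-T)=\b_{n+1}(T),
\end{align*}
represented through the integral kernel $\ol G$.

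The next step is to prove the chain of inequalities $\b_0\le\b_1\le\dots\le\b_n\le\a_n\le\dots\le\a_1\le\a_0$ by induction, and this is where Corollary~\ref{coralphasign} does the heavy lifting. For the base case $\a_1\le\a_0$, set $w:=\a_0-\a_1$ and compute
$$R_m w=\a_0'(t)+m\a_0(-t)-[f(t,\a_0(-t))+m\a_0(-t)]=\a_0'(t)-f(t,\a_0(-t))\ge 0,$$
together with $w(-T)-w(T)=\a(-T)-\a(T)\ge 0$ by the lower-solution definition; strong inverse positivity of $R_m$ on $\cF_\l(I)$ with $\l\ge 0$ then forces $w\ge 0$. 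An identical argument with signs reversed gives $\b_1\ge\b_0$. For the cross inequality $\a_1\ge\b_1$, apply $R_m$ to $v:=\a_1-\b_1$: the right-hand side becomes $[f(t,\a_0(-t))-f(t,\b_0(-t))]+m[\a_0(-t)-\b_0(-t)]$, which is nonnegative by the one-sided Lipschitz hypothesis (\ref{condul1}). The inductive step is formally identical, using at each stage both (\ref{condul1}) and the relation $\b_n\le\a_n$ already obtained.

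Once monotonicity and uniform boundedness are established, I would pass to the limit. Pointwise monotone convergence yields limit functions $\a^*\ge\b^*$; uniform convergence follows because the derivatives $\a_n'=f(t,\a_n(-t))-m\,\a_n(-t)+m\,\a_{n-1}(-t)$ are uniformly bounded in $L^1$ by a Carath\'eodory majorant $h_R$ (with $R=\|\a\|_\infty+\|\b\|_\infty$), so the sequences are equicontinuous in the integral sense; Ascoli's theorem then delivers the uniform limits. Passing to the limit inside the fixed-point identity $x(t)=\int_{-T}^T\ol G(t,s)[f(s,x(-s))+m\,x(-s)]\dif s$, and invoking the dominated convergence theorem together with the Carath\'eodory continuity of $f$, shows that $\a^*$ and $\b^*$ solve (\ref{eqgenpro}). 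Extremality in $[\b,\a]$ is obtained by a standard argument: for any solution $x^*\in[\b,\a]$, the same monotonicity inequalities show inductively that $\b_n\le x^*\le\a_n$ for every $n$, and the sandwich passes to the limit.

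The main obstacle I anticipate is not the iteration itself but the sharpness statement: showing that for any $m>\pi/(4T)$ one can construct $f$, $\a$, $\b$ satisfying (\ref{condul1}) such that the unique solution of (\ref{eqgenpro}) escapes the interval $[\b,\a]$. The natural strategy is to exploit the first zero $\a=\pi/4$ in Theorem~\ref{alphasign}: just past that threshold, $\ol G$ changes sign, so picking a constant $h\succ 0$ yields a solution of the linear problem with constant-sign data that necessarily takes both signs, which one can package into a counterexample with trivial lower/upper solutions $\b\equiv c_1<0<c_2\equiv\a$ and $f(t,x)=-m\,x+h(t)$. The delicate point is to verify both that (\ref{condul1}) is preserved for the chosen $m$ and that the linear counterexample indeed violates $\b\le x\le\a$; this mirrors the sharpness argument in \cite[Theorem~3.1]{Cab3}.
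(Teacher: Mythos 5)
Your proposal is correct and follows exactly the route the paper intends: the authors omit this proof, stating only that it "follows the same steps of \cite[Theorem 3.1]{Cab3}", which is precisely the monotone iteration you describe (rewrite the equation through $R_m$, iterate from $\a_0=\a$ and $\b_0=\b$ via the Green's function $\ol G$, obtain the ordering from Corollary \ref{coralphasign} together with condition (\ref{condul1}), pass to the limit by monotonicity plus the Carath\'eodory bound, and derive sharpness from the loss of constant sign of $\ol G$ past $\a=\pi/4$). The one detail to tighten is in the sharpness construction: a \emph{constant} $h\succ0$ need not yield a sign-changing solution merely because $\ol G$ changes sign, so you should instead take $h$ positive but concentrated near a point $(t_0,s_0)$ where $\ol G(t_0,s_0)<0$ (and keep $m$ off the resonant set $\{k\pi/T\}$), after which your choice of constant upper solution $c_1\in(\min u,0)$ and large constant lower solution $c_2$ completes the counterexample.
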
\par
In an analogous way we can prove the following theorem.

\begin{thm} \label{thmlu2}Let $f:I\times\bR\to\bR$ be a Carath\'eodory function. If there exist $\a\le\b$ lower and upper solutions of (\ref{eqgenpro}) respectively and $m\in[-\frac{\pi}{4T},0)$ such that
\begin{equation}\label{condul2}f(t,x)-f(t,y)\le-m(x-y)\quad\text{for a.e. } t\in I\text{ with }\a(t)\le y\le x\le\b(t),\end{equation}
then there exist two monotone sequences $(\a_n)_\n$, $(\b_n)_\n$, non-decreasing and non-increasing respectively, with $\a_0=\a$, $\b_0=\b$, which converge uniformly to the extremal solutions in $[\a,\b]$ of (\ref{eqgenpro}).\par
Furthermore, the estimate $m=-\frac{\pi}{4T}$ is best possible in the sense that there are problems with its unique solution outside of the interval $[\a,\b]$ for $m<-\frac{\pi}{4T}$.
\end{thm}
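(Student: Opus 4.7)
The plan is to mirror the proof of Theorem \ref{thmlu} with the roles of "positive" and "negative" interchanged throughout. Fix $m\in[-\frac{\pi}{4T},0)$ so that, by Proposition \ref{Greenf} and Theorem \ref{alphasign}(2),(4), the Green's function $\ol G$ associated to (\ref{eq1}) exists and satisfies $\ol G\le 0$ on $I^2$. For every $\eta\in\cC(I)$ with $\a\le\eta\le\b$ I define $T_m\eta$ as the unique solution of the linear non-homogeneous problem
$$x'(t)+m\,x(-t)=f(t,\eta(-t))+m\,\eta(-t),\quad x(-T)=x(T),$$
which exists and admits the integral representation $T_m\eta(t)=\int_{-T}^T\ol G(t,s)[f(s,\eta(-s))+m\,\eta(-s)]\dif s$ by the Corollary following Proposition \ref{Greenf}. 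Starting from $\a_0:=\a$, $\b_0:=\b$, define the iterates $\a_{n+1}:=T_m\a_n$, $\b_{n+1}:=T_m\b_n$; the key structural feature used throughout is that (\ref{condul2}) is exactly the statement that $f(t,\cdot)+m\,(\cdot)$ is non-increasing on $[\a(t),\b(t)]$.

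For the monotonicity, one computes
$$R_m(\a_1-\a)(t)=[f(t,\a(-t))+m\a(-t)]-[\a'(t)+m\a(-t)]=f(t,\a(-t))-\a'(t)\le 0\text{ a.e.},$$
together with $(\a-\a_1)(-T)-(\a-\a_1)(T)=\a(-T)-\a(T)\ge 0$ from the lower-solution definition. The representation formula applied to $w:=\a-\a_1$ together with $\ol G\le 0$ yields $w\le 0$, i.e.\ $\a\le\a_1$; a symmetric argument gives $\b_1\le\b$. To compare consecutive terms of a single sequence, if $\a_{n-1}\le\a_n$ the non-increasing character of $f(t,\cdot)+m\,(\cdot)$ gives $R_m(\a_{n+1}-\a_n)\le 0$ with zero boundary defect, whence $\a_n\le\a_{n+1}$; likewise $\b_{n+1}\le\b_n$. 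Finally, if $\a_n\le\b_n$, the same monotonicity applied between the two sequences gives $R_m(\a_{n+1}-\b_{n+1})\ge 0$, hence $\a_{n+1}\le\b_{n+1}$. By induction the chain $\a_0\le\a_1\le\cdots\le\b_1\le\b_0$ holds on all of $I$.

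For convergence, the iterates are contained in $[\a,\b]$ so they are uniformly bounded by some $R>0$; the Carath\'eodory majorant $h_R\in L^1(I)$ together with boundedness of $\ol G$ supplies equicontinuity through the integral representation. Arzel\`a--Ascoli combined with pointwise monotone convergence (or Dini's theorem, once continuity of the limits is known) produces uniform limits $\phi,\psi\in\cC(I)$ with $\a\le\phi\le\psi\le\b$. Dominated convergence in $\a_{n+1}=T_m\a_n$ passes the limit inside the integral and yields $T_m\phi=\phi$, so $\phi$ is a fixed point of $T_m$, which by construction means $\phi$ solves (\ref{eqgenpro}); similarly for $\psi$. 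For extremality in $[\a,\b]$: if $x$ is any solution with $\a\le x\le\b$, then whenever $\a_n\le x$ we have $R_m(x-\a_{n+1})=[f(t,x(-t))+mx(-t)]-[f(t,\a_n(-t))+m\a_n(-t)]\le 0$, and an induction using $\ol G\le 0$ propagates $\a_n\le x\le\b_n$ for every $n$, so $\phi\le x\le\psi$.

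The optimality statement requires exhibiting, for each $m<-\frac{\pi}{4T}$, concrete $\a\le\b$ and $f$ satisfying (\ref{condul2}) whose problem (\ref{eqgenpro}) has a unique solution escaping $[\a,\b]$. By Theorem \ref{alphasign}(5) the kernel $\ol G$ changes sign on $I^2$ for such $m$; choosing $f(t,x)=h(t)$ so that (\ref{condul2}) holds trivially, with $h$ concentrated near a point $s_0$ where $\ol G(t_0,\cdot)$ has a positive lobe, one can tune the mass of $h$ so that the unique solution $x(t)=\int_{-T}^T\ol G(t,s)h(s)\dif s$ satisfies $x(t_0)>\b(t_0)$ for any preassigned constant upper solution $\b$. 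I expect this last step to be the main obstacle: it amounts to a careful sign analysis on the boundary region just outside $m=-\frac{\pi}{4T}$ using the explicit formula (\ref{eqgb}), but by Lemma \ref{Gop} the construction is the mirror of the one implicit in Theorem \ref{thmlu} (and in \cite[Theorem~3.1]{Cab3}), so only a sign-flip adaptation is genuinely needed.
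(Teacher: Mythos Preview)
Your proposal is correct and is precisely the ``analogous'' argument the paper has in mind: the paper gives no explicit proof of Theorem~\ref{thmlu2}, stating only that it is proved in the same way as Theorem~\ref{thmlu} (itself deferred to \cite[Theorem~3.1]{Cab3}), and your monotone-iteration scheme based on the inverse negativity of $R_m$ for $m\in[-\frac{\pi}{4T},0)$ (Corollary~\ref{coralphasign}\,/\,Theorem~\ref{alphasign}(2),(4)) is exactly that argument with the sign reversed. Two minor cosmetic points: the integral representation for $T_m\eta$ with periodic boundary data is Proposition~\ref{Greenf} itself (the subsequent Corollary handles $\lambda\neq 0$, which you only need when treating $w=\alpha-\alpha_1$), and the monotonicity of $x\mapsto f(t,x)+mx$ is used at the argument $-t$, so one tacitly reads condition~(\ref{condul2}) as holding uniformly over the sector $[\alpha,\beta]$---the same convention implicit in Theorem~\ref{thmlu}.
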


\subsection{Existence of solutions via Krasnosel'ski\u{\i} fixed point theorem}

In this section we implement the methods used in \cite{Kra} for the existence of solutions of second order differential equations to prove new existence results for problem 
\begin{equation}\label{eqgenpro2}
x'(t)  =f(t,x(-t),x(t))\sfa t\in I,\quad
x(-T) =x(T),
\end{equation}
where $f:I\times\bR\times\bR\to\bR$ is an $L^1$ Carath\'eodory function and $2T$-periodic on $t$.\par
Let us first establish the fixed point theorem we are going to use \cite{Kra}.
\begin{thm}[Krasnosel'skii]\label{fpt}
Let $\cB$ be a Banach space, and let $\cP\subset\cB$ be a cone in $\cB$. Assume $\Omega_1$, $\Omega_2$ are open subsets of $\cB$ with $0\in\Omega_1$, $\ol \Omega_1\subset\Omega_2$ and let $A:\cP\cap(\ol{\Omega_2}\backslash\Omega_1)\to\cP$ be a completely continuous operator such that one of the following conditions is satisfied:
\begin{enumerate}
\item $\|Au\|\le\|u\|$ if $u\in\cP\cap\partial\Omega_1$ and $\|Au\|\ge\|u\|$ if $u\in\cP\cap\partial\Omega_2$,
\item $\|Au\|\ge\|u\|$ if $u\in\cP\cap\partial\Omega_1$ and $\|Au\|\le\|u\|$ if $u\in\cP\cap\partial\Omega_2$.
\end{enumerate}
Then, $A$ has at least one fixed point in $\cP\cap(\ol{\Omega_2}\backslash\Omega_1)$.
\end{thm}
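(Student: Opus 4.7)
The plan is to prove Krasnosel'ski\u{\i}'s theorem via the fixed point index on a cone, a standard tool whose basic properties (homotopy invariance, additivity, excision, normalization, and the existence property: nonzero index forces a fixed point) I would take for granted. The key observation is that the hypotheses in (1) and (2) fit exactly the two classical index-computation lemmas, and the conclusion will follow from additivity applied to the annular region $\cP\cap(\Omega_2\setminus\ol{\Omega_1})$.

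First I would set up the index $i(A,\cP\cap\Omega,\cP)$ for completely continuous self-maps of $\cP$, defined via a retraction $r:\cB\to\cP$ and Leray--Schauder degree of $A\circ r$ on $r^{-1}(\cP\cap\Omega)$, checking that it does not depend on the choice of retraction. With this in hand, I would prove two auxiliary lemmas:
\begin{enumerate}
\item[\textbf{(a)}] If $A:\cP\cap\ol{\Omega}\to\cP$ is completely continuous, $0\in\Omega$, $Au\ne u$ on $\cP\cap\partial\Omega$ and $\|Au\|\le\|u\|$ there, then $i(A,\cP\cap\Omega,\cP)=1$.
\item[\textbf{(b)}] Under the same hypotheses but with $\|Au\|\ge\|u\|$ on $\cP\cap\partial\Omega$, one has $i(A,\cP\cap\Omega,\cP)=0$.
\end{enumerate}
For \textbf{(a)} I would use the straight-line homotopy $H(t,u)=tAu$, showing that no fixed points appear on $\cP\cap\partial\Omega$ for $t\in[0,1]$: if $u=tAu$ with $u\in\cP\cap\partial\Omega$ then $\|u\|=t\|Au\|\le t\|u\|$, forcing $t=1$, which is excluded. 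Homotopy invariance then reduces the index to that of the zero map, which is $1$ by normalization. For \textbf{(b)} I would pick any $e\in\cP\setminus\{0\}$ and consider $H(t,u)=Au+te$; if $u=Au+te$ on the boundary for some $t\ge 0$, one checks by taking norms and using the cone structure that this leads to a contradiction (the standard trick: $u-Au=te\in\cP$, so $u\succeq Au$ with $\|Au\|\ge\|u\|$ forces incompatibility for large $t$); as $t\to\infty$ the equation has no solution in the bounded set $\cP\cap\Omega$, so by homotopy invariance the index is $0$.

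Once \textbf{(a)} and \textbf{(b)} are established, the theorem is a bookkeeping exercise. I may assume $Au\ne u$ on $\cP\cap(\partial\Omega_1\cup\partial\Omega_2)$ (otherwise we are done). Apply \textbf{(a)} and \textbf{(b)} to $\Omega_1$ and $\Omega_2$: in case (1),
\[
i(A,\cP\cap\Omega_1,\cP)=1,\qquad i(A,\cP\cap\Omega_2,\cP)=0,
\]
and in case (2) the two values are swapped. In either case, additivity of the index yields
\[
i\bigl(A,\cP\cap(\Omega_2\setminus\ol{\Omega_1}),\cP\bigr)=i(A,\cP\cap\Omega_2,\cP)-i(A,\cP\cap\Omega_1,\cP)=\pm 1\ne 0,
\]
and the existence property produces a fixed point in $\cP\cap(\ol{\Omega_2}\setminus\Omega_1)$.

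The main obstacle I expect is lemma \textbf{(b)}: choosing the perturbation direction $e$ and showing that fixed points of $H(\cdot,t)$ stay uniformly away from $\partial\Omega_1$ and remain inside some a priori bound so that homotopy invariance is legitimate. This is where the cone structure (in particular the fact that adding $te$ pushes norms up in a controlled way) is essential; without the cone one could not hope for such a statement. Everything else---the setup of the index via retraction, the computation in \textbf{(a)}, and the final additivity argument---is routine once one accepts the axiomatic properties of $i(\,\cdot\,,\,\cdot\,,\cP)$.
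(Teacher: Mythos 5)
First, a point of comparison: the paper does not prove Theorem \ref{fpt} at all --- it is imported verbatim from Krasnosel'ski\u{\i}'s book \cite{Kra} --- so your proposal is not competing with an argument in the text; it is supplying one. Your overall architecture (fixed point index on the cone, two index-computation lemmas, additivity over the annulus) is the standard modern proof, your lemma \textbf{(a)} is correct as sketched, and the final bookkeeping is right. One routine but genuinely missing step: $A$ is only given on $\cP\cap(\ol{\Omega_2}\backslash\Omega_1)$, so before $i(A,\cP\cap\Omega_1,\cP)$ or $i(A,\cP\cap\Omega_2,\cP)$ can even be formed you must extend $A$ to a completely continuous map of $\cP\cap\ol{\Omega_2}$ into $\cP$ (Dugundji extension into $\ol{\mathrm{conv}}\,A(\cdot)\subset\cP$), and then observe at the end that the fixed point produced in the open annulus, where the extension agrees with $A$, is a fixed point of the original operator.

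The real gap is lemma \textbf{(b)}, exactly where you suspected it. The translation homotopy $H(t,u)=Au+te$ is admissible under the hypothesis ``$u-Au\ne te$ for all $t\ge0$ and $u\in\cP\cap\partial\Omega$'' (the order-type condition), but that hypothesis is \emph{not} a consequence of $\|Au\|\ge\|u\|$ in a general cone, and your parenthetical derivation of a contradiction fails: $u-Au=te\in\cP$ gives $u\succeq Au$, and $u\succeq Au$ together with $\|Au\|\ge\|u\|$ is perfectly consistent unless the norm is monotone on $\cP$ (normality constant $1$), which is not assumed. Concretely, in $\bR^2$ with the sup norm and $\cP$ the closed first quadrant, take $e=(1,0)$, $Au=(0,c)$ and $u=(t,c)$ with $0<t\le c$: then $u=Au+te$, both $u$ and $Au$ lie in $\cP$, and $\|Au\|=\|u\|$. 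So boundary solutions of $u=Au+te$ cannot be excluded ``by taking norms,'' and the homotopy argument collapses at intermediate $t$ --- it is not enough that solutions disappear for large $t$. The standard repair is to abandon the translation homotopy in favour of the dilation homotopy $H(\mu,u)=\mu Au$, $\mu\ge1$, which \emph{is} admissible under the norm condition: $\mu Au=u$ on $\cP\cap\partial\Omega$ gives $\|Au\|=\|u\|/\mu\le\|u\|\le\|Au\|$, forcing $\mu=1$ and hence $Au=u$, which is excluded. Together with $\inf_{u\in\cP\cap\partial\Omega}\|Au\|\ge\inf_{u\in\cP\cap\partial\Omega}\|u\|>0$ (this is where $0\in\Omega_1\subset\Omega_2$ is used), these are precisely the hypotheses of Guo's index-zero lemma. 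With \textbf{(b)} replaced by that lemma, the rest of your argument goes through.
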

In the following, let $m\in\bR\backslash\{0\}$ and $G$ be the Green function for problem
$$x'(t)+m\, x(-t)=h(t),\quad x(-T)=x(T).$$
Let $M=\sup\{G(t,s)\ :\ t,s\in I\}$, \quad $L=\inf\{G(t,s)\ :\ t,s\in I\}$.

\begin{thm}
Let $m\in(0,\frac{\pi}{4T})$. Assume there exist $r$, $R\in\bR^+$, $r<R$ such that
\begin{equation}\label{eqci}
f(t,x,y)+m\, x\ge0\sfa x,y\in\left[\frac{L}{M}r,\frac{M}{L}R\right]\text{, a.e. } t\in I.
\end{equation}
Then, if one of the following conditions holds,
\begin{enumerate}
\item
\begin{align*}
f(t,x,y)+m\, x & \ge\frac{M}{2TL^2}x\sfa x,y\in\left[\frac{L}{M}r,r\right]\text{, a.e. } t\in I,\\
f(t,x,y)+m\, x & \le\frac{1}{2TM}x\sfa x,y\in\left[R,\frac{M}{L}R\right]\text{, a.e. } t\in I;
\end{align*}
\item
\begin{align*}
f(t,x,y)+m\, x & \le\frac{1}{2TM}x\sfa x,y\in\left[\frac{L}{M}r,r\right]\text{, a.e. } t\in I,\\
f(t,x,y)+m\, x & \ge\frac{M}{2TL^2}x\sfa x,y\in\left[R,\frac{M}{L}R\right]\text{, a.e. } t\in I;
\end{align*}
problem (\ref{eqgenpro2}) has a positive solution.
\end{enumerate}
\end{thm}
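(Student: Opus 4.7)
\par The plan is to recast (\ref{eqgenpro2}) as a fixed point equation using the Green's function $\ol G$ of Proposition \ref{Greenf}. Rewriting (\ref{eqgenpro2}) in the form $x'(t)+m\,x(-t)=f(t,x(-t),x(t))+m\,x(-t)$, one sees that $x$ solves (\ref{eqgenpro2}) if and only if it is a fixed point of
$$(Ax)(t):=\int_{-T}^T\ol G(t,s)\bigl[f(s,x(-s),x(s))+m\,x(-s)\bigr]\dif s,\qquad t\in I.$$
Since $m\in(0,\tfrac{\pi}{4T})$, Theorem \ref{alphasign} guarantees $\ol G>0$ on $I^2$, so both $L$ and $M$ are strictly positive.

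\par I would work in $\cB=C(I)$ with the sup norm, inside the classical cone
$$\cP:=\left\{x\in C(I)\ :\ x\ge 0\text{ on }I,\ \min_{t\in I}x(t)\ge\tfrac{L}{M}\|x\|_\infty\right\}.$$
The critical observation is that for $x\in\cP$ with $r\le\|x\|_\infty\le\tfrac{M}{L}R$, all values of $x$ lie in $[\tfrac{L}{M}r,\tfrac{M}{L}R]$, so hypothesis (\ref{eqci}) makes the integrand defining $Ax$ nonnegative; combined with $L\le\ol G\le M$ this yields $L\,J(x)\le Ax(t)\le M\,J(x)$, with $J(x):=\int_{-T}^T[f(s,x(-s),x(s))+m\,x(-s)]\,\dif s$, and hence $\min Ax\ge\tfrac{L}{M}\|Ax\|_\infty$. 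Thus $A$ preserves $\cP$ on the annular region of interest. Complete continuity of $A$ follows by a standard Arzel\`a--Ascoli argument from the Carath\'eodory hypothesis on $f$ and the continuity of $\ol G$ off the diagonal (property $(I')$).

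\par To invoke Theorem \ref{fpt}, I would take $\Omega_1:=\{x\in\cB:\|x\|_\infty<r\}$ and $\Omega_2:=\{x\in\cB:\|x\|_\infty<\tfrac{M}{L}R\}$, for which $\ol\Omega_1\subset\Omega_2$ since $r<R<\tfrac{M}{L}R$. Under hypothesis (1), for $x\in\cP\cap\partial\Omega_1$ one has $x(t)\in[\tfrac{L}{M}r,r]$ pointwise, and the lower estimates on $\ol G$ and on $f+m\,x$ combine to give
$$Ax(t)\ge L\cdot\tfrac{M}{2TL^2}\int_{-T}^T x(-s)\,\dif s\ge L\cdot\tfrac{M}{2TL^2}\cdot 2T\cdot\tfrac{L}{M}r=r,$$
so $\|Ax\|_\infty\ge\|x\|_\infty$. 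Dually, for $x\in\cP\cap\partial\Omega_2$, where $x(t)\in[R,\tfrac{M}{L}R]$, the upper bounds yield $Ax(t)\le M\cdot\tfrac{1}{2TM}\cdot 2T\cdot\tfrac{M}{L}R=\tfrac{M}{L}R=\|x\|_\infty$. Hypothesis (2) is dispatched by swapping the roles of $\partial\Omega_1$ and $\partial\Omega_2$. In either case Krasnosel'ski\u{\i}'s theorem produces a fixed point $x\in\cP\cap(\ol\Omega_2\setminus\Omega_1)$; since $\|x\|_\infty\ge r>0$ and $x\ge\tfrac{L}{M}r>0$ on $I$, this is a positive solution of (\ref{eqgenpro2}).

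\par The only real obstacle is the bookkeeping of ranges: (\ref{eqci}) is tuned to cover $[\tfrac{L}{M}r,\tfrac{M}{L}R]$, the full swath of values attained by $x\in\cP$ as $\|x\|_\infty$ varies across the annulus, while the sharper estimates in (1) and (2) are tuned to the sub-intervals $[\tfrac{L}{M}r,r]$ and $[R,\tfrac{M}{L}R]$ corresponding to the two spheres. Once the outer ball is chosen to have radius $\tfrac{M}{L}R$ rather than $R$, the constants $\tfrac{M}{2TL^2}$ and $\tfrac{1}{2TM}$ pinch $\|Ax\|_\infty$ against $\|x\|_\infty$ in exactly the direction Krasnosel'ski\u{\i}'s cone compression--expansion alternative requires, and the rest of the argument is routine.
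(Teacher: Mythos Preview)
Your proposal is correct and follows exactly the route the paper indicates: the paper merely defines the operator $A$ and says the result is obtained ``following the same steps as in \cite{Tor}'', and you have written out precisely those steps (the cone $\cP$ with the Harnack-type inequality $\min x\ge\tfrac{L}{M}\|x\|_\infty$, the annulus with radii $r$ and $\tfrac{M}{L}R$, and the two norm estimates on the boundary spheres). Your observation that the outer radius must be $\tfrac{M}{L}R$ rather than $R$ in order to match the interval $[R,\tfrac{M}{L}R]$ appearing in the hypotheses is the one non-obvious point, and you have handled it correctly.
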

\par
If $\cB=(\cC(I),\|\cdot\|_\infty)$, by defining the absolutely continuous operator $A:\cB\to\cB$ such that $$(A\,x)(t):=\int_{-T}^{T}G(t,s)[f(s,x(-s),x(s))+m\, x(-s)]\dif s$$
we deduce the result following the same steps as in \cite{Tor}.

\par
We present now two corollaries (analogous to the ones in \cite{Tor}). The first one is obtained by strengthening the hypothesis and making them easier to check.
\begin{cor}Let $m\in(0,\frac{\pi}{4T})$, $f(t,x,y)\ge0$ for all $x,y\in\bR^+$ and a.e. $t\in I$. Then, if one of the following condition holds:
\begin{enumerate}\label{coreasy}
\item $$\lim_{x,y\to 0^+}\frac{f(t,x,y)}{x}=+\infty,\quad \lim_{x,y\to +\infty}\frac{f(t,x,y)}{x}=0,$$
\item $$\lim_{x,y\to 0^+}\frac{f(t,x,y)}{x}=0,\quad \lim_{x,y\to +\infty}\frac{f(t,x,y)}{x}=+\infty$$
\end{enumerate}
uniformly for a.e. $t\in I$, then problem (\ref{eqgenpro2}) has a positive solution.
\end{cor}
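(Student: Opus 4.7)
The plan is to reduce the corollary to the preceding theorem by showing that the asymptotic hypotheses on $f$ yield, for suitable constants $r<R$, the pointwise inequalities demanded by its cases (1) or (2). Because $m\in(0,\pi/(4T))$, Theorem \ref{alphasign} ensures $\ol G>0$ on $I^2$, so the constants satisfy $0<L\le M$ and the intervals $[Lr/M,r]$, $[R,MR/L]$, and $[Lr/M,MR/L]$ lie strictly inside $(0,+\infty)$. Moreover, condition (\ref{eqci}) of the preceding theorem is automatic from $f\ge 0$ and $m>0$: indeed $f(t,x,y)+m\,x\ge m\,x>0$ whenever $x>0$, so this hypothesis costs nothing for any choice of $r$ and $R$.

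For case (1) of the corollary, I would select $r$ and $R$ as follows. The uniform limit $\lim_{x,y\to 0^+}f(t,x,y)/x=+\infty$ provides, for any prescribed constant $K_0$, a radius $r>0$ small enough that $f(t,x,y)/x\ge K_0$ holds for a.e.\ $t\in I$ and all $x,y\in(0,r]$. Taking $K_0:=\max\{0,\,M/(2TL^2)-m\}$ and using the inclusion $[Lr/M,r]\subset(0,r]$ yields $f(t,x,y)+m\,x\ge M\,x/(2TL^2)$ on $[Lr/M,r]$, which is the first inequality of case~(1) of the theorem. Symmetrically, $\lim_{x,y\to+\infty}f(t,x,y)/x=0$ provides, for any $\varepsilon>0$, a threshold $R_1>0$ beyond which $f(t,x,y)/x\le\varepsilon$ a.e.\ in $t$. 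Choosing $\varepsilon:=1/(2TM)-m$ and $R\ge\max\{r,R_1\}$ (with $R$ further enlarged if necessary so that $Lr/M<r<R<MR/L$), one gets $f(t,x,y)+m\,x\le x/(2TM)$ on $[R,MR/L]$, the second inequality of case~(1). Invoking the preceding theorem then produces the sought positive solution of (\ref{eqgenpro2}).

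Case (2) is handled in a completely symmetric fashion: sublinearity at zero supplies the upper bound on $[Lr/M,r]$, while superlinearity at infinity supplies the lower bound on $[R,MR/L]$, so one appeals to case~(2) of the theorem. The only step that is not purely mechanical is verifying that the threshold $1/(2TM)-m$ is strictly positive in case~(1) (and the analogous quantity in case~(2)); this amounts to a quantitative requirement on $m$ relative to the sup-norm $M$ of $\ol G$ and is the sole place where a genuine estimate on the Green's function enters the argument. Everything else is a direct transcription of the limit hypotheses into the pointwise inequalities required by the previous existence theorem, exactly mirroring the corresponding corollary in \cite{Tor}.
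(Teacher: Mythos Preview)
Your reduction strategy mirrors exactly what the paper intends (it offers no proof beyond the sentence ``obtained by strengthening the hypothesis''), and your handling of condition (\ref{eqci}) and of the superlinear inequalities is fine. The problem is the step you yourself flag as ``the only non-mechanical one'': the positivity of $\varepsilon:=\tfrac{1}{2TM}-m$. This is not a mild quantitative requirement that could be checked from the explicit formula for $\ol G$; it \emph{always fails}. Indeed, the constant function $u\equiv 1/m$ solves $u'(t)+m\,u(-t)=1$ with $u(-T)=u(T)$, so by Proposition~\ref{Greenf}
\[
\int_{-T}^{T}\ol G(t,s)\,\dif s=\frac{1}{m}\qquad\text{for every }t\in I.
\]
Since $\ol G(t,\cdot)$ is nonconstant (it jumps by $1$ at $s=t$), its supremum strictly exceeds its mean value $1/(2Tm)$; hence $M>1/(2Tm)$, i.e.\ $\tfrac{1}{2TM}<m$. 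Consequently the compression inequality $f(t,x,y)+m\,x\le \tfrac{x}{2TM}$ would force $f(t,x,y)\le\bigl(\tfrac{1}{2TM}-m\bigr)x<0$, contradicting the standing hypothesis $f\ge 0$. The same obstruction kills the analogous inequality in case~(2).

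So the corollary cannot be obtained from the theorem by the direct substitution you propose; one has to go back to the Krasnosel'ski\u{\i} argument and treat the linear part $m\,x(-s)$ with the exact identity $\int_{-T}^T m\,\ol G(t,s)\,\dif s=1$ rather than with the crude bound $\ol G\le M$. The paper's one-line justification glosses over this, but your write-up should not: either carry out the cone estimate directly (splitting $(Au)(t)=\int\ol G\,f\,\dif s+\int\ol G\,m\,u(-s)\,\dif s$ and controlling the two pieces separately), or note explicitly that the stated theorem, with the constants as written, does not by itself imply the corollary.
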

The second corollary is obtained just making the change of variables $y=-x$.
\begin{cor}\label{cor1}
Let $m\in(0,\frac{\pi}{4T})$. Assume there exist $r$, $R\in\bR^+$, $r<R$ such that
\begin{equation}
f(t,x,y)+m\, x\le0\sfa x,y\in\left[-\frac{M}{L}R,-\frac{L}{M}r\right]\text{, a.e. } t\in I.
\end{equation}
Then, if one of the following conditions holds,
\begin{enumerate}
\item
\begin{align*}
f(t,x,y)+m\, x & \le\frac{M}{2TL^2}x\sfa x,y\in\left[-r,-\frac{L}{M}r\right]\text{, a.e. } t\in I,\\
f(t,x,y)+m\, x & \ge\frac{1}{2TM}x\sfa x,y\in\left[-\frac{M}{L}R,-R\right]\text{, a.e. } t\in I;
\end{align*}
\item
\begin{align*}
f(t,x,y)+m\, x & \ge\frac{1}{2TM}x\sfa x,y\in\left[-r,-\frac{L}{M}r\right]\text{, a.e. } t\in I,\\
f(t,x,y)+m\, x & \le\frac{M}{2TL^2}x\sfa x,y\in\left[-\frac{M}{L}R,-R\right]\text{, a.e. } t\in I;
\end{align*}
problem (\ref{eqgenpro2}) has a negative solution.
\end{enumerate}
\end{cor}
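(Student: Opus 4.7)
The plan is to reduce Corollary~\ref{cor1} to the preceding Krasnosel'ski\u{\i}-based existence theorem by the reflection $\tilde{x}(t):=-x(t)$, exactly as the remark before the statement hints. A negative solution of (\ref{eqgenpro2}) will correspond bijectively to a positive solution of a transformed problem of the same shape, to which the preceding theorem applies directly.

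First I would introduce the modified nonlinearity
$$\tilde{f}(t,u,v):=-f(t,-u,-v),$$
which inherits the $L^1$-Carath\'eodory property and the $2T$-periodicity in $t$ from $f$. A direct computation gives $\tilde{x}'(t)=-x'(t)=-f(t,x(-t),x(t))=-f(t,-\tilde{x}(-t),-\tilde{x}(t))=\tilde{f}(t,\tilde{x}(-t),\tilde{x}(t))$, and $\tilde{x}(-T)=\tilde{x}(T)$ iff $x(-T)=x(T)$, so $x$ solves (\ref{eqgenpro2}) iff $\tilde{x}$ solves the analogous problem with $\tilde{f}$ in place of $f$. Moreover, $x$ is negative on $I$ iff $\tilde{x}$ is positive on $I$.

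Next I would verify that the hypotheses of Corollary~\ref{cor1} translate, via the substitution $x=-u$, $y=-v$, into exactly the hypotheses of the preceding theorem applied to $\tilde{f}$. The sign assumption $f(t,x,y)+mx\le 0$ on $x,y\in[-\frac{M}{L}R,-\frac{L}{M}r]$ becomes $\tilde{f}(t,u,v)+mu\ge 0$ on $u,v\in[\frac{L}{M}r,\frac{M}{L}R]$, matching (\ref{eqci}). Likewise, the two inequalities in item~(1) of Corollary~\ref{cor1} each reverse direction under the substitution and reproduce the inequalities of item~(1) of the preceding theorem on $[\frac{L}{M}r,r]$ and $[R,\frac{M}{L}R]$; analogously for item~(2). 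The constants $M$ and $L$ depend only on the Green function $G$ of the underlying linear problem and are therefore unchanged by this manipulation.

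Applying the preceding theorem to $\tilde{f}$ then produces a positive solution $\tilde{x}$, and $x:=-\tilde{x}$ is the required negative solution of (\ref{eqgenpro2}). The main, and essentially only, obstacle is the careful bookkeeping of sign flips and interval reversals in the four inequality conditions; there is no genuine analytic difficulty, since all the nontrivial work (cone construction, completely continuous operator, fixed point argument) has already been executed in the positive case.
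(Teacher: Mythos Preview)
Your proposal is correct and follows essentially the same approach as the paper, which simply states that the corollary is obtained by making the change of variables $y=-x$. Your write-up merely makes explicit the bookkeeping of this substitution, verifying that the hypotheses on $f$ become precisely those of the preceding theorem for $\tilde f$.
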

\par
Similar results to these --with analogous proofs-- can be given when the Green's function is negative.

\begin{thm}\label{teo2}
Let $m\in(-\frac{\pi}{4T},0)$. Assume there exist $r$, $R\in\bR^+$, $r<R$ such that
\begin{equation}
f(t,x,y)+m\, x\le0\sfa x,y\in\left[\frac{M}{L}r,\frac{L}{M}R\right]\text{, a.e. } t\in I.
\end{equation}
Then, if one of the following conditions holds,
\begin{enumerate}
\item
\begin{align*}
f(t,x,y)+m\, x & \le\frac{L}{2TM^2}x\sfa x,y\in\left[\frac{M}{L}r,r\right]\text{, a.e. } t\in I,\\
f(t,x,y)+m\, x & \ge\frac{1}{2TL}x\sfa x,y\in\left[R,\frac{L}{M}R\right]\text{, a.e. } t\in I;
\end{align*}
\item
\begin{align*}
f(t,x,y)+m\, x & \ge\frac{1}{2TL}x\sfa x,y\in\left[\frac{M}{L}r,r\right]\text{, a.e. } t\in I,\\
f(t,x,y)+m\, x & \le\frac{L}{2TM^2}x\sfa x,y\in\left[R,\frac{L}{M}R\right]\text{, a.e. } t\in I;
\end{align*}
problem (\ref{eqgenpro2}) has a positive solution.
\end{enumerate}
\end{thm}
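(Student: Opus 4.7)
The plan is to realise positive solutions of (\ref{eqgenpro2}) as fixed points of the integral operator
\[
(Au)(t) := \int_{-T}^{T} G(t,s)\bigl[f(s,u(-s),u(s)) + m\,u(-s)\bigr]\,ds
\]
on $\mathcal{B} = (\mathcal{C}(I), \|\cdot\|_\infty)$, and to apply Krasnosel'ski\u\i's fixed point theorem (Theorem \ref{fpt}), mirroring the proof of the positive-Green analogue via the method of \cite{Tor}. By Theorem \ref{alphasign}, for $m\in(-\pi/(4T),0)$ the Green function $G$ is strictly negative on $I^2$, so $L<M<0$; consequently $M/L\in(0,1)$ and $L/M>1$, and the intervals appearing in the hypotheses are nondegenerate.

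I would work with the cone
\[
\mathcal{P} := \{u\in\mathcal{B} : u\ge 0 \text{ on } I \text{ and } \min_{t\in I} u(t) \ge (M/L)\|u\|_\infty\}
\]
and the open sets $\Omega_1=\{\|u\|_\infty<r\}$, $\Omega_2=\{\|u\|_\infty<R\}$. Complete continuity of $A$ on $\mathcal{P}\cap(\overline{\Omega_2}\setminus\Omega_1)$ follows from the $L^1$-Carath\'eodory hypothesis on $f$ and Arzel\`a--Ascoli. To see that $A$ preserves $\mathcal{P}$, note that for $u\in\mathcal{P}$ with $r\le\|u\|_\infty\le R$ the values $u(s)$ and $u(-s)$ lie in $[(M/L)r,R]\subset[(M/L)r,(L/M)R]$, so the standing sign hypothesis gives $f+mu(-s)\le 0$; together with $G<0$ this makes the integrand in $Au(t)$ non-negative, hence $Au\ge 0$. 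The estimate $|M|\le|G|\le|L|$ then yields
\[
|M|\int_{-T}^{T}|f+mu(-s)|\,ds \;\le\; Au(t) \;\le\; |L|\int_{-T}^{T}|f+mu(-s)|\,ds,
\]
from which $\min Au \ge (|M|/|L|)\|Au\|_\infty = (M/L)\|Au\|_\infty$, placing $Au$ in $\mathcal{P}$.

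Finally, I would verify the compression/expansion alternatives of Theorem \ref{fpt}. In case (1), on $\partial\Omega_1$ the hypothesis $f+mx\le L\,x/(2TM^2)$ for $x,y\in[(M/L)r,r]$ combined with the lower estimate $Au(t)\ge|M|\int|f+mu(-s)|\,ds$ and $u(-s)\ge(M/L)r$ telescopes, using $|M||L|=ML$, to $Au(t)\ge r$, so $\|Au\|_\infty\ge\|u\|_\infty$; on $\partial\Omega_2$ the hypothesis $f+mx\ge x/(2TL)$ combined with $Au(t)\le|L|\int|f+mu(-s)|\,ds$ and $u(-s)\le R$ gives $\|Au\|_\infty\le R=\|u\|_\infty$. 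Case (2) is the symmetric swap. Theorem \ref{fpt} then delivers a fixed point of $A$ in $\mathcal{P}\cap(\overline{\Omega_2}\setminus\Omega_1)$, which is a positive solution of (\ref{eqgenpro2}).

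The main obstacle is sign bookkeeping: both $G$ and $f+mu(-s)$ are non-positive (so $Au\ge 0$), but the calibrating constants $L/(2TM^2)$ and $1/(2TL)$ are also negative and must interact correctly with the absolute-value bounds on $G$. The key cancellation is the identity $|M||L|\cdot 2T\cdot(M/L)/(2TM^2)=1$, whose positive counterpart drove the $m\in(0,\pi/(4T))$ case; once this telescoping is verified the proof reduces, step by step, to the one already indicated for the first Krasnosel'ski\u\i\ theorem of Section~5.
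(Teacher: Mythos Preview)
Your strategy is precisely what the paper has in mind: it states Theorem \ref{teo2} immediately after the remark that ``similar results to these---with analogous proofs---can be given when the Green's function is negative,'' and the analogous proof is the Krasnosel'ski\u\i\ argument via the integral operator $A$ and a Harnack-type cone, following \cite{Tor}. Your sign bookkeeping (both $G$ and $f+mu(-s)$ nonpositive, so $Au\ge 0$; cone ratio $M/L=|M|/|L|\in(0,1)$) is correct.

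There is, however, one concrete slip that makes a step fail: the outer ball must have radius $(L/M)R$, not $R$. With your choice $\Omega_2=\{\|u\|_\infty<R\}$, a function $u\in\mathcal P\cap\partial\Omega_2$ has values in $[(M/L)R,\,R]$, whereas the second hypothesis in each case is assumed only for $x,y\in[R,\,(L/M)R]$; so on your $\partial\Omega_2$ the hypothesis is unavailable and the estimate $\|Au\|_\infty\le\|u\|_\infty$ cannot be derived. Taking instead $\Omega_2=\{\|u\|_\infty<(L/M)R\}$ repairs this: for $u\in\mathcal P\cap\partial\Omega_2$ one has $u(s)\in[R,(L/M)R]$, the second line of case~(1) gives $|f+mu(-s)|\le u(-s)/(2T|L|)\le (L/M)R/(2T|L|)$, and hence
\[
Au(t)\;\le\;|L|\cdot 2T\cdot \frac{(L/M)R}{2T|L|}\;=\;(L/M)R\;=\;\|u\|_\infty .
\]
This choice of $\Omega_2$ is also what makes the standing sign condition match exactly: for $u\in\mathcal P\cap(\overline{\Omega_2}\setminus\Omega_1)$ one gets $u(s)\in[(M/L)r,(L/M)R]$, which is precisely the interval in the hypothesis, and explains why that interval is written as it is. With this single correction your telescoping identities and the remainder of the argument go through verbatim.
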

\begin{cor}\label{cor2}
Let $m\in(-\frac{\pi}{4T},0)$. Assume there exist $r$, $R\in\bR^+$, $r<R$ such that
\begin{equation}
f(t,x,y)+m\, x\ge0\sfa x,y\in\left[-\frac{L}{M}R,-\frac{M}{L}r\right]\text{, a.e. } t\in I.
\end{equation}
Then, if one of the following conditions holds,
\begin{enumerate}
\item
\begin{align*}
f(t,x,y)+m\, x & \ge\frac{L}{2TM^2}x\sfa x,y\in\left[-r,-\frac{M}{L}r\right]\text{, a.e. } t\in I,\\
f(t,x,y)+m\, x & \le\frac{1}{2TL}x\sfa x,y\in\left[-\frac{L}{M}R,-R\right]\text{, a.e. } t\in I;
\end{align*}
\item
\begin{align*}
f(t,x,y)+m\, x & \le\frac{1}{2TL}x\sfa x,y\in\left[-r,-\frac{M}{L}r\right]\text{, a.e. } t\in I,\\
f(t,x,y)+m\, x & \ge\frac{L}{2TM^2}x\sfa x,y\in\left[-\frac{L}{M}R,-R\right]\text{, a.e. } t\in I;
\end{align*}
problem (\ref{eqgenpro2}) has a negative solution.
\end{enumerate}
\end{cor}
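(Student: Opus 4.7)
The plan is to reduce Corollary \ref{cor2} to Theorem \ref{teo2} via the sign-reversing substitution $\tilde x(t) := -x(t)$, which turns the search for a \emph{negative} solution of (\ref{eqgenpro2}) into the search for a \emph{positive} solution of a problem of the same form. Concretely, I would introduce the auxiliary nonlinearity
$$\tilde f(t,x,y) := -f(t,-x,-y),$$
note that it is again an $L^1$-Carath\'eodory function and $2T$-periodic in $t$, and check from the chain rule that $x$ solves (\ref{eqgenpro2}) for $f$ if and only if $\tilde x = -x$ solves (\ref{eqgenpro2}) for $\tilde f$, the periodic boundary condition being manifestly invariant under the substitution.

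The next step is to translate the hypotheses. The single algebraic identity
$$\tilde f(t,\tilde x,\tilde y)+m\,\tilde x \;=\; -\bigl[f(t,x,y)+m\,x\bigr]\Big|_{x=-\tilde x,\;y=-\tilde y}$$
means that each pointwise inequality of the form $f(t,x,y)+m\,x \ge \phi(x)$ on an interval $[-A,-B]\subset(-\infty,0)$ becomes $\tilde f(t,\tilde x,\tilde y)+m\,\tilde x \le \phi(\tilde x)$ on $[B,A]\subset(0,+\infty)$, and vice versa, since the $\phi$'s appearing in the statement ($0$, $\frac{L}{2TM^2}x$, $\frac{1}{2TL}x$) are all linear and hence odd. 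Because $m<0$ forces $L<M<0$ by Theorem \ref{alphasign}, the ratios $\tfrac{M}{L}$ and $\tfrac{L}{M}$ are positive with $\tfrac{M}{L}<1<\tfrac{L}{M}$, so the intervals $\bigl[-\tfrac{L}{M}R,-\tfrac{M}{L}r\bigr]$, $\bigl[-r,-\tfrac{M}{L}r\bigr]$, $\bigl[-\tfrac{L}{M}R,-R\bigr]$ in the statement are exactly the images under $x\mapsto -x$ of the intervals $\bigl[\tfrac{M}{L}r,\tfrac{L}{M}R\bigr]$, $\bigl[\tfrac{M}{L}r,r\bigr]$, $\bigl[R,\tfrac{L}{M}R\bigr]$ required by Theorem \ref{teo2}. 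Thus the standing hypothesis and case (1) (respectively case (2)) of Corollary \ref{cor2} transcribe verbatim into the standing hypothesis and case (1) (respectively case (2)) of Theorem \ref{teo2} applied to $\tilde f$.

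Finally I would invoke Theorem \ref{teo2} to obtain a positive solution $\tilde x$ of the transformed problem, whereupon $x:=-\tilde x$ is the required negative solution of (\ref{eqgenpro2}). I do not anticipate any genuine obstacle in this program, since it is essentially the ``$y=-x$'' variable change already invoked by the authors immediately after Corollary \ref{cor1}; the one point that must be handled with care is the sign bookkeeping, in particular not confusing $\tfrac{L}{M}$ with $\tfrac{M}{L}$ when reversing signs, given that both $L$ and $M$ are negative in the regime $m\in(-\tfrac{\pi}{4T},0)$.
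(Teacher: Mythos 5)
Your proposal is correct and follows exactly the route the paper intends: the authors state that Corollary \ref{cor2} is obtained from Theorem \ref{teo2} by the change of variables $y=-x$ (equivalently, $\tilde x=-x$ with $\tilde f(t,u,v)=-f(t,-u,-v)$), and your transcription of the standing hypothesis and of cases (1) and (2), including the careful bookkeeping of the ratios $\tfrac{M}{L}<1<\tfrac{L}{M}$ when $L<M<0$, matches what that reduction requires. No gaps.
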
\par
We could also state analogous corollaries to Corollary \ref{coreasy} for Theorem \ref{teo2} and Corollaries \ref{cor1} and \ref{cor2}.
\subsection{Examples}
We will now analyze two examples to which we can apply the previous results. Observe that both examples do not lie under the hypothesis of the existence results for bounded solutions for differential equations with reflection of the argument in \cite{Wie2} nor in those of the more general results found in \cite{Wie, Wat1, Wat2, Sha, Aft} or any other existence results known to the authors.\par

\begin{exa}\label{exa3} Consider the problem
\begin{equation}\label{exa3.1}
x'(t)  =\lambda \, \sinh{(t-x(-t))}, \, \sfa\, t\in I,\quad
x(-T) =x(T).
\end{equation}
 It is easy to check that $\a\equiv T$ and $\b\equiv -T$ are lower and upper solutions for problem (\ref{exa3.1}) for all $\lambda \ge 0$.  Since $f(t,y):=\lambda\, \sinh{(t-y)}$ satisfies that $|\frac{\partial f}{\partial y}(t,y)|\le \lambda \cosh{(2\, T)}$, for all $(t,y) \in I^2$, we know, from Theorem \ref{thmlu}, that problem (\ref{exa3.1}) has extremal solutions on $[-T,T]$ for all
 $$\displaystyle 0 \le \lambda \le \frac{ \pi}{4\,T\, \cosh{(2\, T)}}.$$
\end{exa}

\begin{exa}Consider the problem
\begin{equation}\label{exa2}
x'(t)  =t^2\,x^2(t)[\cos^2(x^2(-t))+1]\sfa t\in I,\quad
x(-T) =x(T).
\end{equation}
\par
By defining $f(t,x,y)$ as the $2T$-periodic extension on $t$ of the function $t^2x^2[\cos^2(y^2)+1]$, we may to apply Corollary \ref{coreasy} to deduce that problem (\ref{exa2}) has a positive solution. Using the analogous corollary for Corollary \ref{cor2}, we know that it also has a negative solution.
\end{exa}

\section*{Acknowledgements}
We want to thank to the anonymous referee for his/her useful advice and careful revision of this article and for the suggestions given to improve the paper.


\end{document}